\documentclass[UTF8]{article}
\usepackage{amsmath,enumerate,amsfonts,amssymb,color,graphicx,amsthm}
\renewcommand{\raggedright}{\leftskip=0pt \rightskip=0pt plus 0cm}

\usepackage{multirow,caption,subcaption}
\usepackage{indentfirst}
\usepackage{slashed}
\usepackage{mathrsfs}
\usepackage{textcomp}
\usepackage{extarrows}
\usepackage{authblk}
\usepackage{titlesec}
\allowdisplaybreaks[4]
\usepackage{textcomp}
\usepackage{fancyhdr}
\usepackage{verbatim}
\usepackage{hyperref}
\usepackage{framed}
\usepackage[normalem]{ulem}
\usepackage{tikz}
\usetikzlibrary{patterns}
\usepackage[margin=1.25in]{geometry}
\usepackage{cite}
\hypersetup{
	colorlinks,
	citecolor=blue,
	filecolor=blue,
	linkcolor=blue,
	urlcolor=black
}

\numberwithin{equation}{section}
\newtheorem{thm}{Theorem}[section]
\newtheorem{lem}[thm]{Lemma}            
\newtheorem{prop}[thm]{Proposition}     
\newtheorem{defn}[thm]{Definition}      
\newtheorem{cor}[thm]{Corollary}        
\newtheorem{rem}[thm]{Remark}

  {\par\noindent\textbf{\emph{Proof of #1.}}\par\nobreak}
  {\par\medskip}

\newcommand{\R}{\mathbb{R}}

\newcommand{\be}{\begin{equation}}
\newcommand{\ee}{\end{equation}}

\newif\ifshowintroduction
\showintroductionfalse   

\numberwithin{equation}{section}

\begin{document}
\title{\bf Fixed-spinor radial solutions of a perturbed Euclidean spinorial Yamabe equation}
\author{\medskip  {\sc Ting Li},\ \ {\sc Zhongwei Tang}\footnote{Z. Tang is supported by National Science Foundation of China (12471099, 12271223).},  \ \
	{\sc Yuhang Zhang}    }

\date{}

\maketitle\begin{abstract}
We study the perturbed Euclidean spinorial Yamabe equation
\[
    D_{g_{\mathbb R^n}}\psi
    =
    \frac n2\bigl(1+\varepsilon K(x)\bigr)
    |\psi|^{2^*-2}\psi
    \qquad\text{on }\mathbb R^n,\quad n\ge2,
\]
where $2^*:=2n/(n-1)$, $\varepsilon>0$ is a small parameter and $K$ is a prescribed radial function.
We establish a finite-dimensional reduction within a radial
spinor space determined by a fixed unit spinor, reducing
the problem to the analysis of a function of the dilation
parameter.
Under suitable assumptions on $K$, we obtain explicit
existence criteria for solutions with this prescribed
spinorial structure, together with a multiplicity criterion
yielding solutions associated with distinct scales.
These criteria are expressed in terms of the behavior
of $K$ at the origin and weighted integral quantities.
\end{abstract}
{\noindent \small \bf Key words:} Nonlinear Dirac equations, Spinorial Yamabe problem, Radial solutions,  Finite-dimensional reduction.

{\noindent \small \bf Mathematics Subject Classification (2020)}\quad 53C27 ·  35Q41 ·  35B20

\smallskip
\section{Introduction}
Let $(M,g,\sigma)$ be an $n$-dimensional spin manifold with $n\geq 2$, and let $\Sigma M$ denote the associated complex spinor bundle. The Dirac operator $D_g$ is a first-order differential operator acting on smooth sections of $\Sigma M$,
$
D_g:C^\infty(M,\Sigma M)\to C^\infty(M,\Sigma M).
$
See Sect.~\ref{sec:preliminaries} for more details.
In this paper, we seek solutions with a prescribed spinorial symmetry to the perturbed critical Dirac equation
\begin{equation}\label{(1.1)}
D_{g_{\mathbb R^n}}\psi
=
\frac n2\bigl(1+\varepsilon K(x)\bigr)
|\psi|^{2^*-2}\psi
\qquad\text{on }\mathbb R^n,
\end{equation} which can be viewed as a perturbation of the spinorial Yamabe equation.
Here  \(\varepsilon>0\) is sufficiently small,
\(K:\mathbb R^n\to\mathbb R\) is a prescribed radial function and \(
2^*:=\frac{2n}{n-1}\) is the critical Sobolev exponent. 
When $\varepsilon=0$, equation \eqref{(1.1)} reduces to
the Euclidean spinorial Yamabe equation
\begin{equation*}
D_{g_{\mathbb{R}^n}}\psi
=\frac{n}{2}|\psi|^{2^*-2}\psi
\qquad\text{on }\mathbb{R}^n.
\end{equation*} It
admits an explicit family of solutions
\[
\psi_{\lambda,\xi,\gamma}(x)
=
\frac{2^{\frac{n-1}{2}}\lambda^{\frac{n-1}{2}}}
     {(\lambda^2+|x-\xi|^2)^{\frac n2}}
\bigl(\lambda\gamma-(x-\xi)\cdot\gamma\bigr),
\]
where $\lambda>0$, $\xi\in\mathbb{R}^n$, and
$\gamma\in\Sigma_n$ is a constant unit spinor.
These standard bubbles arise from Killing spinors on the
round sphere through conformal transformations; see \cite{BorrelliMalchiodiWu2021}.  More generally, on a closed spin manifold $M$, the spinorial Yamabe equation is closely related to the Bär–Hijazi–Lott invariant
\begin{equation*}
	\lambda_{\min}^{+}(M, [g], \sigma) := \inf\left\{ \frac{\left( \int_{M} |D_{g}\varphi|_{g}^{\frac{2n}{n+1}} d\text{vol}_{g} \right)^{\frac{n+1}{n}}}{\int_{M} (D_{g}\varphi, \varphi)_{g} d\text{vol}_{g}}  \bigg|  \varphi\in\Gamma(\Sigma M), \int_{M} (D_{g}\varphi, \varphi)_{g} d\text{vol}_{g} > 0 \right\},
\end{equation*}
see Ammann \cite{A2003,A2009}. When this variational problem admits a minimizer,
a suitable normalization yields a spinor satisfying
\begin{equation}\label{eq:spinorial yamabe}
D_g\varphi=\frac n2|\varphi|_g^{\frac{2}{n-1}}\varphi,
\qquad
\int_M|\varphi|_g^{2^*}\,d\mathrm{vol}_g
=
\left(\frac2n\lambda_{\min}^{+}(M,[g],\sigma)\right)^n.
\end{equation} For \(n\ge3\) and \(Y(M,[g])\ge0\), Hijazi’s inequality \cite{H1986}  gives
$$
\lambda_{\min}^{+}(M,[g],\sigma)
\ge
\sqrt{\frac{n}{4(n-1)}Y(M,[g])},
$$ where $Y(M,[g])$ denotes the Yamabe invariant of the
conformal class $[g]$. 
Equality holds on the round sphere. Moreover, the spinorial analogue of Aubin’s inequality, established by Ammann et al. \cite{AGHM2008}, yields
\begin{equation}\label{Aubin inequality}
\lambda_{\min}^{+}(M,[g],\sigma)
\le
\lambda_{\min}^{+}(\mathbb{S}^n,[g_{\mathbb{S}^n}],\sigma_{\mathbb{S}^n})
=
\frac n2\operatorname{Vol}(\mathbb{S}^n,g_{\mathbb{S}^n})^{\frac{1}{n}}.
\end{equation}
When this upper bound is strict, the spinorial variational
problem admits a minimizer, yielding a nontrivial solution
of the spinorial Yamabe equation \eqref{eq:spinorial yamabe}; see \cite{A2009}.
Moreover, combining this
strict bound with Hijazi's inequality gives
\[
Y(M,[g])<Y(\mathbb{S}^n,[g_{\mathbb{S}^n}]).
\]Thus, the classical Yamabe problem is solvable in the
conformal class $[g]$. 
 Establishing the strict inequality in \eqref{Aubin inequality} is therefore an important route to existence. However, this strict inequality has so far been established only in a few special geometric settings, and a general result is still lacking; see, e.g., \cite{G2009,H2010,ADH2014,AHM2006,SX2023}.

Variational approaches to nonlinear Dirac equations face two principal
difficulties: the strong indefiniteness of the associated energy
functional and the loss of compactness at the critical Sobolev exponent.  
A natural generalization of the spinorial Yamabe equation is
\begin{equation*}
    D_g\varphi
    =H(x)|\varphi|_g^{\frac{2}{n-1}}\varphi
    \qquad \text{on } M,
\end{equation*}
where $H:M\to\mathbb{R}$ is a prescribed function.
In dimension two, this equation is closely related to the construction
of surfaces with prescribed mean curvature. More precisely, the
spinorial Weierstrass representation relates its solutions to periodic
conformal immersions, possibly with branch points, of the universal
covering $\widetilde M$ into $\mathbb{R}^3$ with mean curvature $H$.
We refer to \cite{IsobeXu2023} and the references therein for further
details.

Several approaches have been developed to study the existence of
solutions for nonconstant coefficients. Raulot \cite{Raulot}
established an existence criterion on closed spin manifolds under
a suitable strict inequality involving a weighted variational
quantity. On the round sphere, Isobe \cite{I2013} developed
a variational perturbation method to treat coefficients close to
a positive constant. This approach was further developed by Isobe
and Xu \cite{IsobeXu2023}, who obtained multiplicity results by
combining finite-dimensional reduction with topological arguments. 
In our recent work \cite{LiTangZhang2026}, we applied this perturbative
framework to obtain existence and multiplicity
results for Euclidean spinorial Yamabe equations with
lower-order and critical perturbations. 

Motivated by these works, we exploit the radial structure of the coefficient in equation  \eqref{(1.1)} to obtain explicit existence and multiplicity criteria for solutions with a prescribed spinorial symmetry.
Symmetry has long been used to recover compactness or reduce the
dimension of critical variational problems. For the scalar critical
Laplace equation, Ambrosetti, García Azorero, and Peral
\cite{AGP1999} studied radial solutions by
restricting the perturbation problem to the scale parameter of the
standard bubbles. For spinorial Yamabe-type equations, group actions
were used by Maalaoui to construct infinitely many solutions on the
round sphere \cite{Maalaoui2016}, while symmetric solutions for a
two-dimensional critical Dirac equation were obtained by Borrelli
\cite{Borrelli2022}. More recently, Maalaoui, Sire, and Xu employed
special spinorial ansatzes and symmetry reductions in their
construction of Delaunay-type singular solutions on spheres
\cite{MaalaouiSireXu2026}.

For Dirac equations, however, a radial ansatz must account for Clifford multiplication. Fix a unit spinor \(\gamma_0\in\Sigma_n\). Applying the Dirac operator to \(f(|x|)\gamma_0\) produces the component \(f'(|x|)\frac{x}{|x|}\cdot\gamma_0\). This leads us to consider spinors of the form
\[
\psi(x)=f_1(|x|)\gamma_0+
f_2(|x|)\frac{x}{|x|}\cdot\gamma_0,
\]
where \(f_1\) and \(f_2\) are real-valued functions. We call spinors of this form fixed-spinor radial spinors and denote by \(E_{\gamma_0}\) the corresponding closed real subspace of \(\mathscr{D}^{1/2}(\mathbb R^n,\Sigma\mathbb R^n)\); see Definition~\ref{defn: fixed-spinor radial space} for its precise construction.

To construct such solutions, we perform a finite-dimensional
reduction in $E_{\gamma_0}$ around the family of centered
bubbles $\psi_\lambda:=\psi_{\lambda,0,\gamma_0}$,
$\lambda>0$.
We denote this one-dimensional critical manifold by
$\mathcal M_{\gamma_0}$ and refer to Section~\ref{sec:The fixed-spinor radial setting} for its
detailed analysis.
Let $F_\varepsilon$ denote the energy functional associated
with equation~\eqref{(1.1)}, and write
\(
    F_\varepsilon^{\gamma_0}
    :=
    F_\varepsilon\big|_{E_{\gamma_0}}.
\)
Two analytic properties are needed to justify this reduction.
First, we prove that every critical point of
$F_\varepsilon^{\gamma_0}$ is a critical point of the full
functional $F_\varepsilon$.
Second, we establish that the restricted unperturbed Hessian
is a self-adjoint Fredholm operator of index zero, with
\[
\ker\bigl((F_0^{\gamma_0})''(\psi_\lambda)\bigr)
=
T_{\psi_\lambda}\mathcal M_{\gamma_0}.
\]
The proof combines angular projections with the conformal
transfer of the problem to the round sphere.
These properties allow us to apply the Lyapunov--Schmidt
reduction and reduce the construction of solutions to
a variational problem in the dilation parameter.

While the perturbation result of Isobe and Xu \cite{IsobeXu2023} relies on a Morse assumption and
an index condition on the prescribed coefficient,
we exploit radial symmetry to obtain explicit
criteria for fixed-spinor radial solutions.
Our assumptions allow $K$ to have degenerate
critical points, and the solutions are constructed
for every prescribed unit spinor $\gamma_0$.

We now state our main results.
We write $K(x)=K(r)$, where $r=|x|$, and assume that
\begin{equation}\tag{K1} \label{ass:K1}
    K\in L^\infty(\mathbb R^n)\cap C^1(\mathbb R^n), 
    \qquad
    \int_1^\infty |K(r)|r^{n-1-\alpha}\,dr<\infty,
    \quad\text{for some }0<\alpha<n.
\end{equation}
The following statement summarizes our principal existence result.
\begin{thm}\label{thm:intro-existence}
Let $n\ge2$ and assume that \eqref{ass:K1} holds.
Suppose that one of the following conditions is satisfied:
\begin{enumerate}

\item[(i)] \(K(0)=0\) and \(K\not\equiv0\);
    \item[(ii)]
    $K\in C^2(\mathbb{R}^n)$ and $
        K(0) K''(0)>0$;

    \item[(iii)]
  $
        \int_0^\infty |K'(r)|r^n\,dr<\infty$,
        and $
        K(0)\int_0^\infty K'(r)r^n\,dr>0$.
\end{enumerate}
Then, for \(|\varepsilon|\) small, equation \eqref{(1.1)} admits a
nontrivial fixed-spinor radial solution
\(\psi_\varepsilon\in E_{\gamma_0}\).\end{thm}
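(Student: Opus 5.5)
The plan is to follow the Lyapunov--Schmidt scheme outlined above. I first use the two structural facts announced in the introduction. Every critical point of $F_\varepsilon^{\gamma_0}$ is a critical point of $F_\varepsilon$. The restricted Hessian $(F_0^{\gamma_0})''(\psi_\lambda)$ is Fredholm of index zero with kernel $T_{\psi_\lambda}\mathcal M_{\gamma_0}$. Together these give, for $|\varepsilon|$ small and $\lambda$ in any compact set $[a,b]\subset(0,\infty)$, a $C^1$ map $\lambda\mapsto w_\varepsilon(\lambda)\in (T_{\psi_\lambda}\mathcal M_{\gamma_0})^\perp\cap E_{\gamma_0}$ with $\|w_\varepsilon(\lambda)\|=O(\varepsilon)$. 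It has the property that $\psi_\lambda+w_\varepsilon(\lambda)$ is a critical point of $F_\varepsilon^{\gamma_0}$ exactly when $\lambda$ is a critical point of the reduced function $\Phi_\varepsilon(\lambda):=F_\varepsilon^{\gamma_0}(\psi_\lambda+w_\varepsilon(\lambda))$. The standard expansion gives
\[
\Phi_\varepsilon(\lambda)=F_0(\psi_1)-\varepsilon\,\Gamma(\lambda)+o(\varepsilon)
\]
in $C^1_{\mathrm{loc}}(0,\infty)$. Here $\Gamma(\lambda)=\frac{n-1}{2}\int_{\mathbb R^n}K|\psi_\lambda|^{2^*}\,dx$, up to a positive constant. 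So any strict local extremum of $\Gamma$ in the interior of some $[a,b]$ persists as a critical point of $\Phi_\varepsilon$. Such extrema are robust under $C^0$-small perturbations, so only $C^0$ control of the $o(\varepsilon)$ term is needed.

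Next I make $\Gamma$ explicit. Since $|\lambda\gamma_0-x\cdot\gamma_0|^2=\lambda^2+|x|^2$, we get $|\psi_\lambda|^{2^*}=c_n\lambda^n(\lambda^2+|x|^2)^{-n}$. Substituting $x=\lambda y$ gives
\[
\Gamma(\lambda)=C\int_{\mathbb R^n}\frac{K(\lambda y)}{(1+|y|^2)^n}\,dy .
\]
This is continuous on $(0,\infty)$, with $\Gamma(\lambda)\to C_0K(0)$ as $\lambda\to0$. Under (K1) it tends to $0$ as $\lambda\to\infty$. Indeed, in the original variables $\Gamma(\lambda)=C\lambda^n\int K(x)(\lambda^2+|x|^2)^{-n}dx$. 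The region $|x|\le1$ contributes $O(\lambda^{-n})$. The region $|x|\ge1$ is bounded by $\lambda^n\int_1^\infty|K|r^{n-1}(\lambda^2+r^2)^{-n}dr$. Using $(\lambda^2+r^2)^{-n}\le \lambda^{-(2n-\alpha)}r^{-\alpha}$, this is at most $\lambda^{\alpha-n}\int_1^\infty|K|r^{n-1-\alpha}dr\to0$, because $\alpha<n$. So $\Gamma$ extends continuously to $[0,\infty]$ with $\Gamma(0)=C_0K(0)$ and $\Gamma(\infty)=0$.

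The three cases then reduce to producing an interior strict extremum.

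In case (i), $\Gamma(0)=\Gamma(\infty)=0$. Also $\Gamma\not\equiv0$: if $\Gamma\equiv0$, the radial weighted integral transform vanishes identically, and injectivity of this Stieltjes-type transform in $\lambda^2$ (Mellin argument) forces $K\equiv0$. Hence $\Gamma$ attains a nonzero global max or min in the interior, which gives a strict-enough extremum. More precisely, I take the set of global extremum points. It is compact in $(0,\infty)$, and the standard argument for isolated extremal values under $C^0$ perturbation gives a critical point of $\Phi_\varepsilon$ nearby.

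In case (ii), for small $\lambda$, Taylor expansion gives $\Gamma(\lambda)=C_0K(0)+c\,K''(0)\lambda^2+o(\lambda^2)$ with $c>0$. This uses $K'(0)=0$, by radiality, and the finite moment $\int|y|^2(1+|y|^2)^{-n}dy$, which needs $n\ge3$. For $n=2$ I handle this by a truncated expansion, and I expect this to be delicate. The condition $K(0)K''(0)>0$ says that $|\Gamma|$ first increases from $|\Gamma(0)|$, yet $\Gamma\to0$ at infinity. So $|\Gamma|$ has an interior maximum exceeding its boundary values.

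In case (iii), I differentiate: $\Gamma'(\lambda)=C\int K'(\lambda|y|)|y|(1+|y|^2)^{-n}dy$. I compare this with $\Gamma(\lambda)-\Gamma(0)$ written as $\int_0^\infty K'(r)\,h(r/\lambda)\,dr$ with positive kernel. As $\lambda\to0$, the hypothesis $\int|K'|r^n<\infty$ gives $\Gamma(\lambda)-\Gamma(0)\sim c\lambda^{-?}$. On reflection, the sharper route is to expand $\Gamma(\lambda)-\Gamma(0)=\int_0^\infty K'(r)\,G(r/\lambda)\,dr$ with $G(s)=\int_{|y|>s}(1+|y|^2)^{-n}dy\sim s^{-n}$. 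Then for large $\lambda$, $\lambda^{-n}$-weighted asymptotics give $\Gamma(\lambda)-\Gamma(0)\approx c\lambda^n\int K'r^{-n}$; for small $\lambda$, dominated convergence yields $\Gamma(\lambda)-\Gamma(0)\approx c\lambda^n\int_0^\infty K'(r)r^{n}\,dr\cdot$(sign-correct constant). The sign condition $K(0)\int K'r^n>0$ then again makes $|\Gamma|$ exceed $|\Gamma(0)|$ near $0$ while tending to $0$ at $\infty$, so an interior extremum exists.

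The main obstacle is getting these small-$\lambda$ asymptotics right, including their signs and constants, uniformly under only (K1). Ensuring the reduction is uniform on the compact $\lambda$-interval containing the extremum is standard by comparison.
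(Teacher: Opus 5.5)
Your reduction step and your treatment of cases (i) and (ii) for $n\ge3$ agree with the paper. Case (iii), however, contains a genuine error: you assign the asymptotics to the wrong end, and the mechanism you conclude with is false. In your representation $\Gamma(\lambda)-\Gamma(0)=C\int_0^\infty K'(r)\,G(r/\lambda)\,dr$, the limit $\lambda\to0$ means $r/\lambda\to\infty$, where $G(r/\lambda)\approx\frac{\omega_n}{n}\lambda^n r^{-n}$. That regime is governed by $K$ near the origin (compare case (ii)). It can at best produce $\lambda^n\int K'(r)r^{-n}\,dr$, an integral that in general diverges at $r=0$, and it never produces $\kappa=\int_0^\infty K'(r)r^n\,dr$.

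In fact hypothesis (iii) says nothing about $\Gamma$ near $\lambda=0$. For example, take $K$ (smoothed) as follows: $K\equiv1$ on $[0,1]$, decreasing to $-M$ on $[1,2]$, increasing back to $0$ on $[2,3]$, and $0$ afterwards. Integrating by parts, $\kappa=-n\int_0^\infty K(r)r^{n-1}\,dr>0$ for $M$ large, so (iii) holds with $K(0)>0$. Yet $\lambda^{-n}\bigl(\Gamma(\lambda)-\Gamma(0)\bigr)\to C\frac{\omega_n}{n}\int_1^3K'(r)r^{-n}\,dr$, and $\int_1^3K'r^{-n}\,dr\le-(1+M)2^{-n}+M2^{-n}<0$. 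So $|\Gamma|$ decreases from $|\Gamma(0)|$, and ``$|\Gamma|$ exceeds $|\Gamma(0)|$ near $0$'' cannot be the argument.

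The missing idea is that $\kappa$ controls the tail $\lambda\to+\infty$, and your own kernel gives this directly. Since $\int_1^\infty|K'|\,dr<\infty$ and $\int_1^\infty|K|r^{n-1-\alpha}\,dr<\infty$ with $n-1-\alpha>-1$, we have $K(r)\to0$. Hence $\int_0^\infty K'\,dr=-K(0)$ and
\[
\Gamma(\lambda)=C\int_0^\infty K'(r)\bigl[G(r/\lambda)-G(0)\bigr]\,dr .
\]
Moreover $0\le\lambda^n\bigl(G(0)-G(r/\lambda)\bigr)\le\frac{\omega_n}{n}r^n$, with pointwise limit $\frac{\omega_n}{n}r^n$. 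Dominated convergence then yields $\lambda^n\Gamma(\lambda)\to-C\frac{\omega_n}{n}\kappa$ as $\lambda\to+\infty$. The paper does the equivalent computation for the derivative in Lemma~\ref{lem 4.3}, namely $\lambda^{n+1}\Gamma'(\lambda)\to\omega_n2^n\kappa$.

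Consequently, under $K(0)\kappa>0$, $\Gamma$ tends to $0$ at infinity from the side opposite to $\Gamma(0)$. So $\Gamma$ changes sign and attains a global extremum in $(0,\infty)$ whose sign is opposite to $K(0)$ (a negative minimum when $K(0)>0$). The extremum arises from this sign change, not from growth of $|\Gamma|$ beyond $|\Gamma(0)|$.

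Separately, you leave the $n=2$ subcase of (ii) undone. The truncation you anticipate does work; the paper carries it out and obtains $\Gamma(\lambda)=\Gamma(0)+2\omega_2K''(0)\lambda^2\log\frac1\lambda+o\bigl(\lambda^2\log\frac1\lambda\bigr)$. The leading term still has the sign of $K''(0)$, so your case (ii) argument goes through once this expansion is supplied.
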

Further existence and multiplicity criteria are expressed
in terms of the function
\begin{equation*}
    \Gamma(\lambda)
    :=
    \int_{\mathbb R^n}
    K(x)
    \left(
        \frac{2\lambda}{\lambda^2+|x|^2}
    \right)^n
    \,dx,
    \qquad \lambda>0.
\end{equation*}

\begin{thm}\label{thm:intro-multiplicity}
Let $n\ge2$ and assume that \eqref{ass:K1} holds.
Suppose that one of the following conditions is satisfied:
\begin{enumerate}
    \item[(i)]
    $\Gamma(1)\ne0$ and $\Gamma(1)\cdot K(0)\le0$;

    \item[(ii)]
    $\Gamma(1)=0$ and $\Gamma'(1)\ne0$.
\end{enumerate}
Then, for $|\varepsilon|$ small,
equation \eqref{(1.1)} admits a nontrivial solution in
$E_{\gamma_0}$.
Moreover, if \textnormal{(ii)} holds and
$\Gamma'(1)\cdot K(0)\ge0$, then equation \eqref{(1.1)}
admits at least two distinct nontrivial solutions in
$E_{\gamma_0}$ .
\end{thm}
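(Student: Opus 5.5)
Our plan is to reduce Theorem~\ref{thm:intro-multiplicity} to the study of critical points of a function of one variable, the reduced functional $\Phi_\varepsilon(\lambda)=F_\varepsilon^{\gamma_0}(\psi_\lambda+w_\varepsilon(\lambda))$, $\lambda>0$, where $w_\varepsilon(\lambda)$ is the Lyapunov--Schmidt correction. This reduction is supplied by the two properties announced in the introduction: critical points of $F_\varepsilon^{\gamma_0}$ are critical points of $F_\varepsilon$, and $(F_0^{\gamma_0})''(\psi_\lambda)$ is Fredholm with kernel $T_{\psi_\lambda}\mathcal M_{\gamma_0}$. The standard expansion then gives
\[
\Phi_\varepsilon(\lambda)=c_0-\varepsilon\, c_n\,\Gamma(\lambda)+o(\varepsilon),
\]
uniformly, together with its $C^1$ version, on compact subsets of $(0,\infty)$. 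Here $c_n>0$ is an explicit constant. The formula comes from $|\psi_\lambda|^{2^*}=\bigl(\tfrac{2\lambda}{\lambda^2+|x|^2}\bigr)^n$, which holds because $|\lambda\gamma_0-x\cdot\gamma_0|^2=\lambda^2+|x|^2$. It therefore suffices to find a local extremum (or a stable critical point) of $\Gamma$ on $(0,\infty)$ and to transfer it to $\Phi_\varepsilon$.

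The next step is to record the boundary behaviour of $\Gamma$. As $\lambda\to0$, the substitution $x=\lambda y$ gives $\Gamma(\lambda)=\int K(\lambda y)(2/(1+|y|^2))^n\,dy\to K(0)|\mathbb S^n|$ by dominated convergence, since $K\in L^\infty\cap C^0$. As $\lambda\to\infty$, we have $\Gamma(\lambda)\le C\lambda^{-n}\int_{|x|\le R}|K| + C\int_{|x|>R}|K|(2\lambda/(\lambda^2+|x|^2))^n$. Using $(2\lambda/(\lambda^2+r^2))^n\le C\lambda^{-(n-\alpha)} r^{-\alpha}$ (this follows from $\lambda^2+r^2\ge \lambda^{2-2\alpha/n}r^{2\alpha/n}$), assumption \eqref{ass:K1} yields $\Gamma(\lambda)\to0$. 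Moreover, the inversion $x\mapsto x/|x|^2$ is not needed. Since $\Gamma$ is $C^1$ on $(0,\infty)$ by differentiation under the integral, $\Gamma$ extends continuously to $[0,\infty]$ with $\Gamma(0)=K(0)|\mathbb S^n|$ and $\Gamma(\infty)=0$.

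Case (i) proceeds as follows. Say $\Gamma(1)>0\ge K(0)$; the opposite sign is symmetric. Then $\Gamma(1)>\max(\Gamma(0^+),\Gamma(\infty))$, so $\Gamma$ attains a strict-in-the-sense-of-sets interior maximum. Precisely, there exist $0<a<1<b$ with $\Gamma(1)>\max\{\Gamma(a),\Gamma(b)\}+\delta$. By the uniform $C^0$ expansion, $\Phi_\varepsilon$ attains an interior minimum (or maximum, according to the sign of $\varepsilon$) on $[a,b]$ for $|\varepsilon|$ small, and this gives a critical point.

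Case (ii) proceeds as follows. With $\Gamma(1)=0$ and, say, $\Gamma'(1)>0$, there are $a<1<b$ near $1$ with $\Gamma(a)<0<\Gamma(b)$. Since $\Gamma(\infty)=0$, $\Gamma$ attains a positive interior maximum on $(b,\infty)$, and hence $\Phi_\varepsilon$ has a critical point at a scale in $(b',\infty)$. This uses the same compact-window argument after choosing $B$ large with $\Gamma(B)<\max\Gamma/2$. This gives existence. If in addition $K(0)\ge0$, i.e.\ $\Gamma(0^+)\ge0>\Gamma(a)$, then $\Gamma$ also attains a negative interior minimum on $(0,a)$, located in some window $[A,a]$ with $A$ small. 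The two windows are disjoint and $\Gamma$ has opposite signs there, so we obtain two critical points of $\Phi_\varepsilon$ at distinct scales. These correspond to distinct solutions $\psi_\lambda+w_\varepsilon(\lambda)$, since $w_\varepsilon=O(\varepsilon)$ and $\psi_{\lambda_1}\ne\psi_{\lambda_2}$. The case $\Gamma'(1)<0$ is symmetric.

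The main obstacle is the uniformity of the reduction. All the windows used are compact subsets of $(0,\infty)$ fixed independently of $\varepsilon$, so only the local uniform estimate $\|w_\varepsilon(\lambda)\|=O(\varepsilon)$ on compacts is needed. This follows from the reduction theory assumed earlier, together with dilation invariance of $F_0$, which makes the invertibility constants independent of $\lambda$. The perturbation term is not dilation invariant, but it is uniformly bounded because $K\in L^\infty$.
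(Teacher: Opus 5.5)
Your proposal is correct and follows essentially the paper's argument. The boundary values $\Gamma(0^+)=K(0)|\mathbb S^n|$ and $\Gamma(\infty)=0$ force interior extrema of $\Gamma$ (one in $(0,1)$ and one in $(1,\infty)$ in the multiplicity case), and these are transferred through the reduction; your strict-boundary-gap window argument plays the role of Theorem~\ref{thm:compact-extremal-set}, where the paper instead appeals to stability of extrema of the real-analytic $\Gamma$. Two small repairs are needed: the AM--GM step should read $\lambda^2+r^2\ge\lambda^{2-\alpha/n}r^{\alpha/n}$, since your exponents yield $\lambda^{-(n-2\alpha)}r^{-2\alpha}$ rather than the bound you state. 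In case (ii), take $a<1<b$ inside an interval around $1$ where $\Gamma'>0$ (or simply use the windows $[A,1]$ and $[1,B]$); otherwise the extrema need not lie in $(0,a)$ and $(b,\infty)$.
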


The paper is organized as follows. In Section~\ref{sec:preliminaries}, we recall the
spinorial Sobolev spaces, the conformal covariance of the Dirac
operator, the stereographic identification, and the abstract
perturbation theorem. In Section~\ref{sec:The fixed-spinor radial setting}, we introduce the fixed-spinor
radial space, prove that it is a natural constraint, and establish
the Fredholm and nondegeneracy properties of the
restricted unperturbed functional. In Section~\ref{sec:The reduced problem and existence results}, we analyze the
reduced function \(\Gamma\) and prove the existence and multiplicity
results.

\section{Preliminaries}\label{sec:preliminaries}

\subsection{Spin structures and the Dirac operator}
\label{subsec:spin-geometry}

In this subsection, we briefly recall some standard facts about spin
geometry and the Dirac operator. We refer to
\cite{A2003,BHMMM2015,F2000,LM1989}
for further background. Let $(M^n,g,\sigma)$ be an $n$-dimensional spin manifold with $n\ge 2$, where $g$ is a Riemannian metric and $\sigma : P_{Spin}(M)\to P_{SO}(M)$ is a fixed spin structure. Let
$
\rho:Spin(n)\to Aut(\Sigma_n)
$
be the complex spin representation, and let
$$
\Sigma M:=P_{Spin}(M)\times_\rho \Sigma_n
$$
be the associated complex spinor bundle. The bundle $\Sigma M$ carries a natural spinorial Levi--Civita connection $\nabla$, a pointwise Hermitian scalar product $\langle\cdot,\cdot\rangle_g$, and a Clifford multiplication
$
\cdot_g:TM\otimes \Sigma M\to \Sigma M$.
These structures are compatible in the sense that
\[
\langle X\cdot_g\varphi,\psi\rangle_g=-\langle\varphi,X\cdot_g\psi\rangle_g,
\]
\[
X\langle\varphi,\psi\rangle_g=\langle\nabla_X\varphi,\psi\rangle_g+\langle\varphi,\nabla_X\psi\rangle_g,
\]
and
\[
\nabla_X (Y\cdot_g\varphi)=(\nabla_XY)\cdot_g\varphi+Y\cdot_g\nabla_X\varphi
\]
for all $X,Y\in\Gamma(TM)$ and $\varphi,\psi\in\Gamma(\Sigma M)$.

Throughout the paper, all function spaces of spinors are regarded as real vector
spaces. Accordingly, we write
$
(\varphi,\psi)_g
:=
\operatorname{Re}\langle\varphi,\psi\rangle_g
$
for the real part of the Hermitian metric. All orthogonality,
self-adjointness, and Riesz representations below are understood with
respect to the corresponding real inner products.

If \(\{e_1,\ldots,e_n\}\) is a local oriented orthonormal frame of
\(TM\), the Dirac operator is defined by
$$
D_g\psi
:=
\sum_{j=1}^n
e_j\cdot_g\nabla_{e_j}\psi.
$$
\subsection{Configuration spaces and variational setting}
\label{configuration-spaces}

We first recall the fractional Sobolev spaces of spinors on a compact
spin manifold. Since \(D_g\) is self-adjoint on
\(L^2(M,\Sigma M)\) and has compact resolvent, there exists an
\(L^2\)-orthonormal basis of eigenspinors
\(\{\varphi_k\}_{k\geq1}\) satisfying
$D_g\varphi_k=\lambda_k\varphi_k$ and $|\lambda_k|\to\infty$ as $k\to\infty$.
For $s \geq 0$,  define 
$$
\left|D_g\right|^s \varphi=\sum_{k=1}^{\infty}\left|\lambda_k\right|^s \alpha_k \varphi_k
$$
for $\varphi=\sum_{k=1}^{\infty} \alpha_k \varphi_k \in L^2(M, \Sigma M)$. Denoting the domain of $\left|D_g\right|^s$ by $H^s(M, \Sigma M)$, we see that $\varphi\in H^s(M, \Sigma M)$ if and only if $\sum_{k=1}^{\infty}\left|\lambda_k\right|^{2 s}\left|\alpha_k\right|^2<\infty$. Hence $H^s(M, \Sigma M)$ coincides with the usual Sobolev space of order $s$, that is $W^{s, 2}(M, \Sigma M)$; see \cite{A2003}. We equip $H^s(M,\Sigma M)$ with the inner product
$$
\left( \varphi,\psi\right)_{s,2}
:=\bigl(|D_g|^s\varphi,|D_g|^s\psi\bigr)_{L^2}
+
(\varphi,\psi)_{L^2},
$$
where $(\cdot,\cdot)_{L^2}$ denotes the $L^2$-inner product on spinors, and write $\|\cdot\|_{s,2}$ for the induced norm. 
Thus, for
\(\varphi=\sum_{k=1}^{\infty}\alpha_k\varphi_k\), we have
$$
\|\varphi\|_{H^s}^2
=
\sum_{k=1}^{\infty}
\bigl(1+|\lambda_k|^{2s}\bigr)|\alpha_k|^2.
$$ 
For simplicity, we write $L^q$ and $H^s$ for $L^q(M,\Sigma M)$
and $H^s(M,\Sigma M)$, respectively, and denote their norms
by $\|\cdot\|_{L^q}$ and $\|\cdot\|_{H^s}$. Let $2^*=\frac{2 n}{n-1}$ be the critical Sobolev exponent for the embedding $H^{1 / 2}(M, \Sigma M) \hookrightarrow L^q(M, \Sigma M)$, $1 \leq q \leq 2^*$.
For the round sphere,
\(
\operatorname{spec}(D_{g_{\mathbb S^n}})
=
\left\{
\pm\left(\frac n2+k\right):k\in\mathbb N_0
\right\}.
\)
Consequently,
\(
0\notin\operatorname{spec}(D_{g_{\mathbb S^n}})
\)
and, since \(D_{g_{\mathbb S^n}}\) is self-adjoint,
$
\operatorname{Dom}(D_{g_{\mathbb S^n}})
=
\operatorname{Dom}(|D_{g_{\mathbb S^n}}|)
=
H^1(\mathbb S^n,\Sigma\mathbb S^n).
$

We then introduce the corresponding Euclidean configuration space. On $\mathbb{R}^n$, we consider the fractional space
$$
\mathscr{D}^{1/2}(\mathbb{R}^n,\Sigma\mathbb{R}^n)
:=
\Bigl\{
\psi:\,
f^{1/2}\psi\in L^2,\,
|D_{g_{\mathbb{R}^n}}|^{1/2}\psi\in L^2
\Bigr\}
$$
endowed with the norm
$$
\|\psi\|_{\mathscr D^{1/2}}
:=\left(
\bigl\||D_{g_{\mathbb{R}^n}}|^{1/2}\psi\bigr\|_2^2+\|f^{1/2}\psi\|_{2}^2\right)^{1/2},
$$
where $f(x)=\frac{2}{1+|x|^2}$.
Here $D_{g_{\mathbb{R}^n}}$ denotes the Euclidean Dirac operator, and $|D_{g_{\mathbb{R}^n}}|^{1/2}$ is defined through the Fourier transform by
$
\mathscr{F}\bigl(|D_{g_{\mathbb{R}^n}}|^{1/2}\psi\bigr)(\xi)
=
|\xi|^{1/2}\mathscr{F}(\psi)(\xi)$ and $\|\left|D_{g_{\mathbb{R}^n}}\right|^{1 / 2} \psi\|_2=\||\cdot|^{1 / 2} \mathscr{F}(\psi)\|_2$. 

The Euclidean and spherical configuration spaces are related by the stereographic identification described below. Let
\(
\pi_N:
\mathbb S^n\setminus\{N\}
\longrightarrow
\mathbb R^n
\)
be the stereographic projection from the north pole \(N\). Its inverse
satisfies
\(
(\pi_N^{-1})^*g_{\mathbb S^n}
=
f^2g_{\mathbb R^n}\),
\(
f(x):=\frac{2}{1+|x|^2}.
\)
Let
\(
\mathcal H_x:
\Sigma_x\mathbb R^n
\longrightarrow
\Sigma_{\pi_N^{-1}(x)}\mathbb S^n
\)
denote the fiberwise isometry induced by the corresponding spin
identification.
For
\(\psi\in C_c^\infty(\mathbb R^n,\Sigma\mathbb R^n)\), define
\begin{equation}\label{stereographic-co}
(I\psi)(q)
:=
f(\pi_N(q))^{-\frac{n-1}{2}}
\mathcal H_{\pi_N(q)}
\bigl(\psi(\pi_N(q))\bigr),
\quad q\in\mathbb S^n\setminus\{N\}.
\end{equation}
The conformal covariance of the Dirac operator gives
\begin{equation}\label{stereographic-covariance}
D_{g_{\mathbb S^n}}(I\psi)(q)
=
f(\pi_N(q))^{-\frac{n+1}{2}}
\mathcal H_{\pi_N(q)}
\bigl(
D_{g_{\mathbb R^n}}\psi(\pi_N(q))
\bigr).
\end{equation}
Equivalently,
$\label{eq:stereographic-intertwining}
D_{g_{\mathbb S^n}}I\psi
=
(f\circ\pi_N)^{-1}
I\bigl(D_{g_{\mathbb R^n}}\psi\bigr).
$
Hence, for compactly supported smooth spinors
\(\psi,\varphi\),
\begin{equation*}
\int_{\mathbb S^n}
\bigl(I\psi,D_{g_{\mathbb S^n}}I\varphi\bigr)
\,dv_{g_{\mathbb S^n}}
=
\int_{\mathbb R^n}
\bigl(\psi,D_{g_{\mathbb R^n}}\varphi\bigr)
\,dx,
\end{equation*}
and
\begin{equation*}
\int_{\mathbb S^n}|I\psi|^{2^*}\,dv_{g_{\mathbb S^n}}
=
\int_{\mathbb R^n}|\psi|^{2^*}\,dx.
\end{equation*}
Moreover,
\[
\|I\psi\|_{L^2(\mathbb S^n)}^2
=
\int_{\mathbb R^n}f(x)|\psi(x)|^2\,dx
=
\|f^{1/2}\psi\|_{L^2(\mathbb R^n)}^2.
\]With the norms introduced above, the map \(I\) extends uniquely to
an isometric isomorphism
\begin{equation}\label{eq:stereographic-isometry}
I:
\mathscr D^{1/2}(\mathbb R^n,\Sigma\mathbb R^n)
\longrightarrow
H^{1/2}(\mathbb S^n,\Sigma\mathbb S^n);
\end{equation}
see, for example, \cite{I2013}.

We now turn to the variational setting. Let \(K\in L^\infty(\mathbb R^n)\) be radial, we associate Eq.~\eqref{(1.1)} the functional $F_\varepsilon:\mathscr D^{1/2}(\R^n,\Sigma\mathbb{R}^n)\to\R$ defined by
\begin{equation}\label{(1.2)}
F_\varepsilon(\psi)=F_0(\psi)+\varepsilon G(\psi), 
\end{equation}
where $$F_0(\psi)=\frac{1}{2} \int_{\mathbb{R}^n}\left(\psi, D_{g_{\mathbb{R}^n}} \psi\right) d \operatorname{vol}_{g_{\mathbb{R}^n}}-\frac{n-1}{4} \int_{\mathbb{R}^n}|\psi|^{2^*} d \operatorname{vol}_{g_{\mathbb{R}^n}}$$
and
$$
G(\psi)=-\frac{n-1}{4} \int_{\mathbb{R}^n} K(r)|\psi|_{g_{\mathbb{R}^n} }^{2^*} d \operatorname{vol}_{g_{\mathbb{R}^n}}.$$
Since \(K\in L^\infty(\mathbb{R}^n)\) and
\(\mathscr{D}^{1/2}\hookrightarrow L^{2^*}\), the functionals
\(F_0\), \(G\), and \(F_\varepsilon\) are well defined and of class
\(C^2\) on \(\mathscr{D}^{1/2}\).
\subsection{Finite-dimensional reduction and perturbation framework}
\label{subsec:abstract-reduction}

We next recall the abstract perturbation method that will be used
below. Let \(H\) be a Hilbert space and consider a family of
functionals
\begin{equation}\label{(2.5)}
	F_{\varepsilon}(u)=F_0(u)+\varepsilon G(u),
\end{equation}
where $F_0$, $G \in C^2(H, \mathbb{R})$ and $\varepsilon>0$. On the unperturbed functional  $F_0$ we shall make
the following assumptions:
\begin{itemize}
	\item [(A1)]$F_0$ admits a $d$-dimensional $C^2$  manifold $\mathcal{M}$, with $d\geq 1$, consisting of critical points of $F_0$;
	\item [(A2)] $\forall z \in \mathcal{M}$, $d^2 F_0(z)$ is a Fredholm operator of index zero;
	\item [(A3)] $\forall z \in \mathcal{M}$, $T_z \mathcal{M}=\operatorname{Ker} d^2 F_0(z)$.
\end{itemize}
Under assumptions (A1)--(A3), one can use the implicit function theorem to find $\omega=\omega(\varepsilon, z)\in T_z \mathcal{M}^{\perp}$ for any $z\in \mathcal{M}$ such that
$$
F_\varepsilon^{\prime}(z+\omega) \in T_z \mathcal{M}.
$$

Letting $\mathcal{M}_\varepsilon=\{z+\omega(\varepsilon,z):z\in\mathcal{M}\}$, one obtains a natural constraint for $F_\varepsilon$, locally diffeomorphic to $\mathcal{M}$. For $z\in\mathcal M$, we define the reduced functional
$$
F_\varepsilon^{red}(z):=F_\varepsilon\bigl(z+\omega(\varepsilon,z)\bigr).
$$
The following result shows that critical points of the reduced functional give rise to critical points of the original functional.
\smallskip
\begin{prop}[{Ambrosetti and Malchiodi \cite[Theorem 2.12]{AM2006}}]\label{reduction}
	Assume {\rm (A1)--(A3)} are satisfied. Let $\mathcal M_c\subset\mathcal M$ be compact and $\varepsilon>0$ small. If $z_\varepsilon\in\mathcal M_c$ is a critical point of  $F_\varepsilon^{red}$, then
	$
	u_\varepsilon:=z_\varepsilon+\omega(\varepsilon,z_\varepsilon)
	$
	is a critical point of $F_\varepsilon$ on $H$.
\end{prop}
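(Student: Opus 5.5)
The plan is to construct $\omega(\varepsilon,z)$ by the implicit function theorem, uniformly on the compact set $\mathcal M_c$. Then I would show that, for small $\varepsilon$, a critical point of $F_\varepsilon^{red}$ forces the remaining tangential component of $F_\varepsilon'$ to vanish. Throughout, I identify $F_\varepsilon'(u)$ with its Riesz representative in $H$ and write $P_z$ for the orthogonal projection onto $T_z\mathcal M$.

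\emph{Construction of $\omega$.} Consider the map
\[
\Phi(\varepsilon,z,w):=(I-P_z)F_\varepsilon'(z+w),\qquad w\in (T_z\mathcal M)^\perp .
\]
By (A1), $\Phi(0,z,0)=0$. The partial derivative in $w$ at $(0,z,0)$ is $L_z:=(I-P_z)F_0''(z)|_{(T_z\mathcal M)^\perp}$.
\begin{enumerate}
\item Since $F_0''(z)$ is self-adjoint and Fredholm of index zero by (A2), its range is closed and equals $(\ker F_0''(z))^\perp$.
\item By (A3), this range is $(T_z\mathcal M)^\perp$.
\item Hence $L_z:(T_z\mathcal M)^\perp\to (T_z\mathcal M)^\perp$ is a bijection, and so an isomorphism.
\end{enumerate}
To apply the implicit function theorem on a fixed space, I would trivialize the normal bundle locally, for instance using $(I-P_{z_0})$ near a fixed $z_0$. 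The continuity of $z\mapsto F_0''(z)$ and of $z\mapsto P_z$ (since $\mathcal M$ is $C^2$) then gives $\|L_z^{-1}\|\le C$ uniformly on $\mathcal M_c$, by compactness. The implicit function theorem yields a $C^1$ map $(\varepsilon,z)\mapsto\omega(\varepsilon,z)$ with $\Phi=0$ and $\omega(\varepsilon,z)=O(\varepsilon)$ uniformly on $\mathcal M_c$. The identity $\Phi=0$ means exactly that $F_\varepsilon'(z+\omega)\in T_z\mathcal M$.

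\emph{Critical points of the reduced functional.} Fix a local $C^1$ frame $q_1,\dots,q_d$ of $T_z\mathcal M$, with $z$ ranging in a chart around $z_\varepsilon$. Set $u=z+\omega(\varepsilon,z)$ and write $F_\varepsilon'(u)=\sum_i a_i q_i$.
\begin{enumerate}
\item For $\xi\in T_z\mathcal M$, the chain rule gives
\[
dF_\varepsilon^{red}(z)[\xi]=\bigl(F_\varepsilon'(u),\,\xi+\partial_z\omega[\xi]\bigr).
\]
\item At a critical point $z_\varepsilon$, taking $\xi=q_j$ gives
\[
\sum_i a_i\bigl[(q_i,q_j)+(q_i,\partial_z\omega[q_j])\bigr]=0,\qquad j=1,\dots,d.
\]
\item Because $\omega(\varepsilon,z)\perp T_z\mathcal M$, we have $(q_i(z),\omega(\varepsilon,z))\equiv0$. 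Differentiating along $q_j$ gives
\[
(q_i,\partial_z\omega[q_j])=-(\partial_z q_i[q_j],\omega)=O(\|\omega\|)=O(\varepsilon),
\]
uniformly on $\mathcal M_c$.
\item The coefficient matrix is therefore the Gram matrix of the $q_i$ plus $O(\varepsilon)$, so it is invertible for $\varepsilon$ small.
\end{enumerate}
Thus $a=0$, and $F_\varepsilon'(u_\varepsilon)=0$.

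The main obstacle is the uniformity in the first step. The fibre $(T_z\mathcal M)^\perp$ varies with $z$, and the constants in the implicit function theorem must be chosen independently of $z\in\mathcal M_c$. I would handle this with the local trivialization described above together with a finite cover of $\mathcal M_c$. Local uniqueness in the implicit function theorem makes the locally defined $\omega$'s agree on overlaps, so $\omega(\varepsilon,\cdot)$ is globally well defined on $\mathcal M_c$.
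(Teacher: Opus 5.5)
Your proof is correct and is essentially the standard argument behind the cited result; the paper gives no proof of its own, only the reference to Ambrosetti--Malchiodi. There, as in your proposal, $\omega$ comes from the implicit function theorem, using that $F_0''(z)$ is an isomorphism of $(T_z\mathcal M)^\perp$ by (A2)--(A3), and the natural-constraint property follows from the perturbed Gram-matrix system via $(q_i,\partial_z\omega[q_j])=-(\partial_z q_i[q_j],\omega)=O(\varepsilon)$. The only cosmetic difference is that Ambrosetti--Malchiodi apply the implicit function theorem to the augmented map $(w,\alpha)\mapsto\bigl(F_\varepsilon'(z+w)-\sum_i\alpha_i q_i,\,(w,q_1),\dots,(w,q_d)\bigr)$ on the fixed space $H\times\mathbb R^d$, which avoids your local trivialization of the normal bundle.
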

Moreover, it can be shown that the reduced functional is well approximated in the sense that
\begin{equation}\label{eq:reduced-expansion}
	F_\varepsilon^{red}(z)=F_0(z)+\varepsilon\Gamma(z)+o(\varepsilon),
	\qquad
	dF_\varepsilon^{red}(z)=\varepsilon\,d\Gamma(z)+o(\varepsilon),
\end{equation}
uniformly on compact subsets of $\mathcal M$, where
$$
\Gamma:=G|_{\mathcal M}.$$
Since $F_0$ is constant on each connected component of $\mathcal M$, the perturbative behavior of $F_\varepsilon^{red}$ is governed by the reduced functional $\Gamma$. As a consequence, one has the following classical perturbation theorem.
\begin{thm}[Ambrosetti et al. {\cite[Theorem 2.1]{AGP1999}}]\label{scalar}
	Assume \((\mathrm{A}1)\)--\((\mathrm{A}3)\) are satisfied and suppose that there exists a critical point $\bar{z} \in \mathcal{M}$ of $\Gamma$ such that one of the following conditions holds:
	\begin{itemize}
		\item [(i)] $\bar{z}$ is nondegenerate;
		\item [(ii)]$\bar{z}$ is a local proper minimum or maximum;
		\item [(iii)] $\bar{z}$ is isolated and the local topological degree of $\nabla \Gamma$ at $\bar{z}, \operatorname{deg}_{\text {loc}}\left(\nabla \Gamma, 0\right) \neq 0$.
	\end{itemize}
	Then for $|\varepsilon|$ small, the functional $F_\varepsilon$ has a critical point $u_\varepsilon$ such that $u_\varepsilon \rightarrow \bar{z}$ as $\varepsilon\rightarrow 0$.
\end{thm}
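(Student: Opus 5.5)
The plan is to work on a small compact coordinate neighbourhood $\mathcal M_c\subset\mathcal M$ of $\bar z$, locate a critical point $z_\varepsilon\in\mathcal M_c$ of $F_\varepsilon^{red}$ for $|\varepsilon|$ small, and then apply Proposition~\ref{reduction} to $u_\varepsilon:=z_\varepsilon+\omega(\varepsilon,z_\varepsilon)$. Since $\bar z$ lies in a single connected component of $\mathcal M$, $F_0$ is constant on $\mathcal M_c$, say $F_0\equiv c_0$. The expansion \eqref{eq:reduced-expansion} then says that
\[
\Phi_\varepsilon(z):=\frac{F_\varepsilon^{red}(z)-c_0}{\varepsilon}
\]
satisfies $\Phi_\varepsilon\to\Gamma$ in $C^1(\mathcal M_c)$ as $\varepsilon\to0$. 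Working in a local chart around $\bar z$, every step below takes place in a ball of $\mathbb R^d$.

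\textbf{Justifying the expansion.} This is the step I expect to be the main technical point. From the implicit function theorem under (A1)--(A3), one gets $\omega(\varepsilon,z)=O(\varepsilon)$ uniformly on $\mathcal M_c$. The reason is that $F_0'(z)=0$ and $d^2F_0(z)$ is invertible on $T_z\mathcal M^\perp$, with a uniform bound on the inverse on the compact set $\mathcal M_c$. A Taylor expansion, again using $F_0'(z)=0$, then gives
\[
F_\varepsilon(z+\omega)=F_0(z)+\varepsilon G(z)+O(\varepsilon^2).
\]
For the derivative, one differentiates in $z$ along $\mathcal M$ and uses that $F_\varepsilon'(z+\omega)\in T_z\mathcal M$. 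Together with the $C^1$ dependence of $\omega$ on $z$, this yields $dF_\varepsilon^{red}=\varepsilon\,d\Gamma+o(\varepsilon)$ uniformly on $\mathcal M_c$. I will take these facts as given, as stated in the text.

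\textbf{Case (iii).} Choose $r>0$ so that $\bar z$ is the only zero of $\nabla\Gamma$ in $\overline{B_r(\bar z)}$, and set $m:=\min_{\partial B_r}|\nabla\Gamma|>0$. For $|\varepsilon|$ small, $\sup_{\overline B_r}|\nabla\Phi_\varepsilon-\nabla\Gamma|<m$. Hence the linear homotopy $t\nabla\Phi_\varepsilon+(1-t)\nabla\Gamma$ has no zeros on $\partial B_r$. Homotopy invariance gives
\[
\deg(\nabla\Phi_\varepsilon,B_r,0)=\deg_{loc}(\nabla\Gamma,\bar z)\neq0,
\]
so $\nabla\Phi_\varepsilon$, and hence $dF^{red}_\varepsilon$, vanishes at some $z_\varepsilon\in B_r$.

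\textbf{Case (i).} This reduces to case (iii): a nondegenerate critical point is isolated, with local degree $\operatorname{sign}\det D^2\Gamma(\bar z)=\pm1$.

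\textbf{Case (ii).} Here only the $C^0$ part of the expansion is needed. Suppose $\bar z$ is a proper local minimum (the maximum case follows by replacing $\Gamma$ with $-\Gamma$). Pick $r$ so that
\[
\delta:=\min_{\partial B_r}\Gamma-\Gamma(\bar z)>0.
\]
For $|\varepsilon|$ small, $\sup_{\overline B_r}|\Phi_\varepsilon-\Gamma|<\delta/3$, so $\min_{\partial B_r}\Phi_\varepsilon>\Phi_\varepsilon(\bar z)$. Thus the minimum of $\Phi_\varepsilon$ over $\overline B_r$ is attained at an interior point $z_\varepsilon$, which is critical for $F^{red}_\varepsilon$. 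If $\varepsilon<0$, the roles of minimum and maximum of $F^{red}_\varepsilon$ swap, but $\Phi_\varepsilon$ is still what is being minimized, so the argument is unchanged.

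\textbf{Convergence.} In every case the argument works for arbitrarily small $r>0$ once $|\varepsilon|\le\varepsilon_0(r)$. Letting $r\to0$ gives $z_\varepsilon\to\bar z$, and combining this with $\|\omega(\varepsilon,z_\varepsilon)\|=O(\varepsilon)$ gives $u_\varepsilon\to\bar z$. Proposition~\ref{reduction} then shows that $u_\varepsilon$ is a critical point of $F_\varepsilon$ on $H$.
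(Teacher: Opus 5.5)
Your proposal is correct and follows the standard route the paper has in mind. The paper gives no proof of its own: it cites \cite[Theorem 2.1]{AGP1999} as a consequence of Proposition~\ref{reduction} and the expansion \eqref{eq:reduced-expansion}, and your argument is that consequence, namely a degree homotopy from the $C^1$-closeness of $(F_\varepsilon^{red}-c_0)/\varepsilon$ to $\Gamma$ in cases (i) and (iii), and an interior extremum from $C^0$-closeness in case (ii). One point to make explicit is that Proposition~\ref{reduction} is stated only for $\varepsilon>0$, so for $\varepsilon<0$ you should either note that the implicit-function construction works for both signs or apply it to $F_0+|\varepsilon|(-G)$, which replaces $\Gamma$ by $-\Gamma$ and preserves each of the hypotheses (i)--(iii).
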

This theorem is a standard consequence of finite-dimensional
reduction. In the present paper, the extrema of
the reduced functional need not be isolated. We shall therefore use
the following compact-set result which is the fixed-spinor specialization of
\cite[Theorem~2.3]{LiTangZhang2026}.
\begin{thm}[Compact local extremal sets]
\label{thm:compact-extremal-set}
Assume that \((\mathrm{A}1)\)--\((\mathrm{A}3)\) hold, and set
\[
\Gamma:=G|_{\mathcal M}.
\]
Suppose that there exists a relatively compact open set
\(U\Subset\mathcal M\) such that either
\[
\min_{\overline U}\Gamma
<
\min_{\partial U}\Gamma\qquad
\text{ or }
\qquad
\max_{\overline U}\Gamma
>
\max_{\partial U}\Gamma.
\]
Then, for $\varepsilon>0$ sufficiently small, the functional $F_\varepsilon$ has a critical point in $H$.
\end{thm}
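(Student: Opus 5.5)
The plan is to run the Lyapunov--Schmidt reduction of Subsection~\ref{subsec:abstract-reduction} on a compact neighbourhood of $\overline U$. Then a strict interior extremum of $\Gamma$ will persist as an interior extremum of the reduced functional $F_\varepsilon^{red}$, and Proposition~\ref{reduction} will turn it into a critical point of $F_\varepsilon$. The argument only needs the $C^0$ part of the expansion \eqref{eq:reduced-expansion}, not the derivative part. I treat the minimum case; the maximum case follows by replacing $G$ with $-G$, since (A1)--(A3) concern only $F_0$.

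\emph{Step 1: reduce to one component.} Choose $z_0\in\overline U$ with $\Gamma(z_0)=\min_{\overline U}\Gamma$. By hypothesis $z_0\notin\partial U$, so $z_0\in U$. Let $\mathcal C$ be the connected component of $\mathcal M$ containing $z_0$, and set $V:=U\cap\mathcal C$. Since components of a manifold are open and closed, $V$ is open in $\mathcal C$ and $\overline V\subset\overline U\cap\mathcal C$ is compact. Any point of $\partial V$ lies in $\overline U$ and in $\mathcal C$ but not in $V$, hence not in $U$, so $\partial V\subset\partial U$. Therefore
\[
\min_{\overline V}\Gamma=\Gamma(z_0)<\min_{\partial U}\Gamma\le\min_{\partial V}\Gamma
\]
(if $\partial V=\emptyset$ there is nothing to compare). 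On $\mathcal C$ the functional $F_0$ equals a constant $c$, because $\mathcal C$ is connected and consists of critical points.

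\emph{Step 2: reduction near $\overline V$.} Take a compact set $\mathcal M_c\subset\mathcal C$ containing a neighbourhood of $\overline V$. By the implicit function theorem, using (A2)--(A3) and compactness, $\omega(\varepsilon,z)$ exists for $|\varepsilon|\le\varepsilon_0$ and $z\in\mathcal M_c$. It is $C^1$ in $z$, and $\|\omega(\varepsilon,z)\|=O(\varepsilon)$ uniformly on $\mathcal M_c$. Hence $F_\varepsilon^{red}$ is $C^1$ on a neighbourhood of $\overline V$. Moreover, by \eqref{eq:reduced-expansion},
\[
\frac{F_\varepsilon^{red}(z)-c}{\varepsilon}=\Gamma(z)+o(1)\qquad\text{uniformly on }\overline V.
\]
This uniformity is the one point needing care. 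It follows by Taylor expanding $F_0$ around $z$, where $F_0'(z)=0$ and $F_0''$ is bounded on a neighbourhood of the compact set, together with continuity of $G$. The result is $F_\varepsilon(z+\omega)=c+\varepsilon G(z)+O(\varepsilon^2)$.

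\emph{Step 3: interior extremum.} Put $\delta:=\min_{\partial V}\Gamma-\Gamma(z_0)>0$. For $0<\varepsilon$ small the $o(1)$ term is below $\delta/3$. Then
\[
\frac{F_\varepsilon^{red}(z_0)-c}{\varepsilon}<\min_{\partial V}\frac{F_\varepsilon^{red}-c}{\varepsilon},
\]
so the minimum of the continuous function $F_\varepsilon^{red}$ over the compact set $\overline V$ is attained at some $z_\varepsilon\in V$. For $\varepsilon<0$ the division reverses the order and we take the maximum instead, which also lies in $V$. Since $V$ is open in the manifold $\mathcal C$ and $F_\varepsilon^{red}$ is $C^1$, we get $dF_\varepsilon^{red}(z_\varepsilon)=0$. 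Proposition~\ref{reduction}, applied with the compact set $\mathcal M_c$, then shows that $z_\varepsilon+\omega(\varepsilon,z_\varepsilon)$ is a critical point of $F_\varepsilon$ in $H$.

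The main obstacle is Step 2: making sure the reduction and the expansion hold uniformly on a neighbourhood of $\overline V$, not just pointwise. This is exactly where relative compactness of $U$ and the uniform invertibility of $d^2F_0(z)$ on $(T_z\mathcal M)^\perp$ over compact sets enter.
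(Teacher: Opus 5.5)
Your argument is correct. The paper itself does not prove this statement: it imports it as the fixed-spinor specialization of \cite[Theorem~2.3]{LiTangZhang2026}, so there is no in-paper argument to match. What you wrote is the standard self-contained proof behind results of this type: reduction over a compact neighbourhood, uniform $C^0$ closeness of $(F_\varepsilon^{red}-c)/\varepsilon$ to $\Gamma$, an interior extremum of $F_\varepsilon^{red}$, and then Proposition~\ref{reduction}.

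Compared with quoting the reference, your version adds two things.

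First, Step~1 makes explicit a point the statement glosses over. If $U$ meets several components of $\mathcal M$ on which $F_0$ takes different constant values, those $O(1)$ differences dominate the $O(\varepsilon)$ term. Then extremizing $F_\varepsilon^{red}$ over all of $\overline U$ need not produce an interior point. Passing to $V=U\cap\mathcal C$, and checking $\partial V\subset\partial U$, is exactly the right fix.

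Second, you show that only the $C^0$ half of \eqref{eq:reduced-expansion} is needed. You also justify its uniformity on $\overline V$ from $\|\omega(\varepsilon,z)\|=O(\varepsilon)$, boundedness of $F_0''$ near a compact set, and continuity of $G$.

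One bookkeeping remark on the maximum case. Replacing $G$ by $-G$ turns $F_0+\varepsilon G$ into the $(F_0,-G)$-problem at parameter $-\varepsilon$. So the maximum case for $\varepsilon>0$ actually rests on the $\varepsilon<0$ half of your minimum-case argument. You do supply that half, so nothing is missing, but it is cleaner to say the maximum case is the mirror image of the minimum case.
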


\section{The fixed-spinor radial setting}\label{sec:The fixed-spinor radial setting}
In this section, we establish the variational framework
for fixed-spinor radial solutions of
equation~\eqref{(1.1)} under assumption~\eqref{ass:K1}.
We first introduce the corresponding real subspace and
prove that critical points of the restricted functional
solve the full equation. We then establish the Fredholm and nondegeneracy properties
required for the abstract perturbation theorem in Section~\ref{sec:preliminaries}.
\subsection{The Euclidean and spherical spaces}
\begin{defn}\label{defn: fixed-spinor radial space}
Fix a unit spinor $\gamma_0\in\Sigma_n$.
We define
\[
    \mathcal E_{\gamma_0}(\mathbb R^n)
    :=
    \left\{
        \psi(x)
        =
        f_1(r)\gamma_0
        +
        f_2(r)\frac{x}{r}\cdot\gamma_0
        : x \in \mathbb{R}^n, 
        f_1,f_2\in C_c^\infty((0,\infty),\mathbb R)
    \right\},
\]
where $r=|x|>0$ and $\psi(0):=0$.
The fixed-spinor radial space is defined by
\[
    E_{\gamma_0}
    :=
    \overline{\mathcal E_{\gamma_0}(\mathbb R^n)}
    ^{\|\cdot\|_{1/2,2}}
    \subset
    \mathscr{D}^{1/2}(\mathbb R^n,\Sigma\mathbb R^n).
\]
\end{defn}
\begin{rem}\label{dirac formula}
	\begin{itemize}
		\item[(i)]Since \(f_1\) and \(f_2\) are compactly supported in \((0,\infty)\), the above
spinors vanish in a neighborhood of the origin and hence are smooth compactly
supported spinors on \(\mathbb R^n\).
\item[(ii)]For $\psi\in\mathcal E_{\gamma_0}(\mathbb R^n)$,
the Clifford relations give
\[
    |\psi(x)|^2=f_1(r)^2+f_2(r)^2,
\]
and
\begin{equation}\label{(1.0)}
    D_{g_{\mathbb R^n}}\psi
    =
    -\left(
        f_2'(r)+\frac{n-1}{r}f_2(r)
    \right)\gamma_0
    +
    f_1'(r)\frac{x}{r}\cdot\gamma_0
    \in\mathcal E_{\gamma_0}(\mathbb R^n).
\end{equation}
\item[(iii)] \(E_{\gamma_0}\) is a closed real subspace of
\(\mathscr{D}^{1/2}(\mathbb R^n,\Sigma\mathbb R^n)\), and is therefore
a real Hilbert space with the induced inner product $(\cdot,\cdot)_{1/2,2}$.
	\end{itemize}
\end{rem}
\begin{lem}
\label{lem:natural-constraint-fixed-gamma}
If
\(
\psi\in E_{\gamma_0}
\)
is a critical point of the restricted functional
\(
F_\varepsilon^{\gamma_0}:=F_\varepsilon|_{E_{\gamma_0}},
\)
then \(\psi\) is a weak solution of equation~\eqref{(1.1)}.

\end{lem}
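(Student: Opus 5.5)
Since $F_\varepsilon$ is $C^1$ on $\mathscr D^{1/2}$, I will use the gradient $\nabla F_\varepsilon(\psi)$, defined by the Riesz representation with respect to the real inner product $(\cdot,\cdot)_{1/2,2}$. The claim is that if $\psi\in E_{\gamma_0}$, then $\nabla F_\varepsilon(\psi)\in E_{\gamma_0}$. Once this holds, criticality on $E_{\gamma_0}$ means $\nabla F_\varepsilon(\psi)\perp E_{\gamma_0}$. Since the gradient also lies in $E_{\gamma_0}$, it must vanish, so $\psi$ is a weak solution of \eqref{(1.1)}.

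To prove the invariance, I would go through the orthogonal projection $P:\mathscr D^{1/2}\to E_{\gamma_0}$ and construct it explicitly. Decompose spinor fields into "fixed-spinor radial" and complementary parts, and average over a symmetry group. The natural group is the stabilizer $\mathcal G\subset \mathrm{Spin}(n)$ of $\gamma_0$, or better the subgroup of $\mathrm{Spin}(n)$ acting by $(g\psi)(x)=\rho(\tilde g)\psi(g^{-1}x)$ that fixes $\gamma_0$ up to a phase. For $n\ge3$ this stabilizer does not act transitively enough on the sphere, so I would instead use an averaging operator built directly from angular projections. For $\psi$ written in polar form, set
\[
(P\psi)(x)=\Bigl(\frac{1}{|\mathbb S^{n-1}|}\int_{\mathbb S^{n-1}}\mathrm{Re}\langle\psi(r\theta),\gamma_0\rangle\,d\theta\Bigr)\gamma_0+\Bigl(\frac{1}{|\mathbb S^{n-1}|}\int_{\mathbb S^{n-1}}\mathrm{Re}\langle\psi(r\theta),\theta\cdot\gamma_0\rangle\,d\theta\Bigr)\frac{x}{r}\cdot\gamma_0 .
\]
This is the $L^2$-orthogonal projection onto spinors of the form $f_1\gamma_0+f_2\frac{x}{r}\cdot\gamma_0$ with real $f_i$.

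The key step is to show that $P$ commutes with $D_{g_{\mathbb R^n}}$ and with $|D_{g_{\mathbb R^n}}|^{1/2}$, and also with multiplication by the radial weight $f$. Commutation with $f$ is immediate. For $D$, I would compute $(D\psi,\gamma_0)$ and $(D\psi,\theta\cdot\gamma_0)$ after integrating over spheres, using the decomposition $D=\frac{x}{r}\cdot(\partial_r+\frac{n-1}{2r}-\frac1r\slashed D_{\mathbb S^{n-1}})$ and integrating by parts on $\mathbb S^{n-1}$. The output should be $P D\psi = D P\psi$, consistent with formula \eqref{(1.0)}. Given this, $P$ commutes with $D^2=-\Delta$, hence with $|D|^{1/2}=(-\Delta)^{1/4}$ by functional calculus, since $P$ commutes with rotations of radial type and with $-\Delta$ acting componentwise. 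Therefore $P$ is an orthogonal projection in $\mathscr D^{1/2}$ whose range is $E_{\gamma_0}$.

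The second ingredient is that the nonlinear part preserves the structure. If $\psi=f_1\gamma_0+f_2\frac{x}{r}\cdot\gamma_0$, then $|\psi|^{2^*-2}=(f_1^2+f_2^2)^{(2^*-2)/2}$ is radial. Since $K$ is radial, $K|\psi|^{2^*-2}\psi$ again has fixed-spinor radial form. Hence for all test spinors $\varphi$,
\[
F_\varepsilon'(\psi)[\varphi]=F_\varepsilon'(\psi)[P\varphi].
\]
For the quadratic term this follows because $P$ is self-adjoint and commutes with $D$. For the nonlinear term it follows because $P$ is $L^2$-self-adjoint and the nonlinearity lies in the range of $P$. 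Consequently $F_\varepsilon'(\psi)[\varphi]=0$ for $\varphi\in E_{\gamma_0}$ implies that it vanishes for all $\varphi$. Density of $\mathcal E_{\gamma_0}$, together with the justification that $P$ maps $\mathscr D^{1/2}$ into the closure $E_{\gamma_0}$, is routine once boundedness of $P$ is known.

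I expect the main obstacle to be verifying that $P$ commutes with $D$, equivalently with the angular Dirac operator. This requires checking that the angular components $\gamma_0$ and $\theta\cdot\gamma_0$ span a $\slashed D_{\mathbb S^{n-1}}$-invariant subspace whose orthogonal complement is also invariant. Here $\theta\cdot\gamma_0$ is a first spherical-harmonic-type spinor. An alternative route would transfer the problem to $\mathbb S^n$ via the isometry \eqref{eq:stereographic-isometry} and use an eigenspinor decomposition there, but I would try the direct angular computation first.
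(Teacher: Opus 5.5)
Your argument is correct in outline. It uses the same angular projection as the paper (your $P$ is the paper's $\Pi$) and the same key identity $F_\varepsilon'(\psi)[\varphi]=F_\varepsilon'(\psi)[P\varphi]$. What differs is how you handle the quadratic term.

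The paper never treats $P$ as an operator on $\mathscr D^{1/2}$. It tests only with $\varphi\in C_c^\infty(\mathbb R^n\setminus\{0\},\Sigma\mathbb R^n)$, for which $\Pi\varphi\in\mathcal E_{\gamma_0}$ outright. It then approximates $\psi$ by $\psi_k\in\mathcal E_{\gamma_0}$, moves $D$ onto $\psi_k$, and uses \eqref{(1.0)} (so $D\psi_k$ is again of fixed-spinor radial form) to replace $\varphi$ by $\Pi\varphi$ sphere by sphere. The origin is removed at the end by an explicit cut-off.

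Your route upgrades $P$ to a bounded $\mathscr D^{1/2}$-orthogonal projection onto $E_{\gamma_0}$ commuting with $D$ and $|D|^{1/2}$. That is a stronger structural fact, close to what the paper only establishes later on $\mathbb S^n$ (Lemma~\ref{lem:projection-fixed-gamma}, Corollary~\ref{cor:commute-borel}) for the Fredholm analysis. The price is threefold:
\begin{enumerate}
\item[(1)] proving $PD\subset DP$ on $H^1(\mathbb R^n)$;
\item[(2)] running functional calculus on a continuous spectrum;
\item[(3)] showing that $C_c^\infty(\mathbb R^n\setminus\{0\})$ is dense in $H^1$ and $\mathscr D^{1/2}$.
\end{enumerate}
Point (3) is not mere ``routine'' bookkeeping: it is exactly the capacity/cut-off step the paper writes out.

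If you follow your route, two points need correcting.
\begin{enumerate}
\item[(a)] Commutation with $|D|^{1/2}$ cannot be justified by ``rotations of radial type.'' $P$ does not commute with the rotation action unless you restrict to spin rotations fixing $\gamma_0$. The commutation comes from $PD\subset DP$ via functional calculus. Since $P$ is only real-linear, you must use real functions of $D^2$, e.g.\ $(D^2+t)^{-1}$ with $t>0$, not complex resolvents.
\item[(b)] Your ``main obstacle'' needs no angular Dirac decomposition. For $\varphi$ supported away from $0$, both $DP\varphi$ and $PD\varphi$ lie in $\mathcal E_{\gamma_0}$. For every $\eta\in\mathcal E_{\gamma_0}$, using $P\eta=\eta$, $PD\eta=D\eta$ and formal self-adjointness, you get $(DP\varphi-PD\varphi,\eta)_{L^2}=(\varphi,D\eta)_{L^2}-(D\varphi,\eta)_{L^2}=0$.
\end{enumerate}
With (b), $\int(\psi,DP\varphi)=\int(P\psi,D\varphi)=\int(\psi,D\varphi)$ for such test spinors. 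Together with the same cut-off at the origin, this already proves the lemma, and the $|D|^{1/2}$ commutation becomes unnecessary.
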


\begin{proof}
Assume that \(\psi\in E_{\gamma_0}
\)
is a critical point of the restricted functional
\(
F_\varepsilon^{\gamma_0}.
\)
for 
\(
\varphi\in C_c^\infty(\mathbb R^n,\Sigma\mathbb R^n),
\) we
define
\[
\mathcal R_\psi(\varphi)
:=
\int_{\mathbb R^n}
(\psi,D_{g_{\mathbb R^n}}\varphi)\,dx
-
\frac n2
\int_{\mathbb R^n}
(1+\varepsilon K(r))|\psi|^{2^*-2}
( \psi,\varphi)\,dx .
\] We shall prove that
\(
\mathcal R_\psi(\varphi)=0,\)
\(\forall \varphi\in C_c^\infty(\mathbb R^n,\Sigma\mathbb R^n).
\)

We first consider
$\varphi\in C_c^\infty
(\mathbb R^n\setminus\{0\},\Sigma\mathbb R^n)$.
To use the critical point condition in $E_{\gamma_0}$,
we replace $\varphi$ by its angular projection onto
the two modes $\gamma_0$ and $\omega\cdot\gamma_0$. For $r>0$, define
\[
\begin{aligned}
    a_\varphi(r)
    &:=
    \frac{1}{|\mathbb S^{n-1}|}
    \int_{\mathbb S^{n-1}}
    (\varphi(r\omega),\gamma_0)\,d\omega,\\
    b_\varphi(r)
    &:=
    \frac{1}{|\mathbb S^{n-1}|}
    \int_{\mathbb S^{n-1}}
    (\varphi(r\omega),\omega\cdot\gamma_0)\,d\omega.
\end{aligned}
\]
Set
\[
    (\Pi\varphi)(x)
    :=
    a_\varphi(r)\gamma_0
    +
    b_\varphi(r)\frac{x}{r}\cdot\gamma_0.
\]
Since $\varphi$ is smooth and compactly supported away
from the origin,
$a_\varphi,b_\varphi\in C_c^\infty((0,\infty),\mathbb R)$,
and hence
$\Pi\varphi\in\mathcal E_{\gamma_0}(\mathbb R^n)$.
We claim that
\begin{equation}\label{resolve-equiv}
    \mathcal R_\psi(\varphi)
    =
    \mathcal R_\psi(\Pi\varphi).
\end{equation}
To see this, first suppose that
\(
\psi
\in  \mathcal E_{\gamma_0}(\mathbb R^n).
\)
By Remark~\ref{dirac formula}, \(D_{g_{\mathbb R^n}}\psi=
A(r)\gamma_0+B(r)\frac{x}{r}\cdot\gamma_0
\)
for some real-valued functions \(A,B\). Since \(\varphi\) and
\(\Pi\varphi\) are compactly supported away from the origin, the formal
self-adjointness of the Dirac operator gives
\[
\int_{\mathbb R^n}(\psi,D_{g_{\mathbb R^n}}\varphi)\,dx
=
\int_{\mathbb R^n}(D_{g_{\mathbb R^n}}\psi,\varphi)\,dx, \text{ and }
\int_{\mathbb R^n}(\psi,D_{g_{\mathbb R^n}}\Pi\varphi)\,dx
=
\int_{\mathbb R^n}(D_{g_{\mathbb R^n}}\psi,\Pi\varphi)\,dx.
\]
Using the definitions of \(a_\varphi\) and \(b_\varphi\), we have
\[
\int_{\mathbb{S}^{n-1}}(D_{g_{\mathbb R^n}}\psi(r\omega),\varphi(r\omega))\,d\omega
=
\int_{\mathbb{S}^{n-1}}(D_{g_{\mathbb R^n}}\psi(r\omega),\Pi\varphi(r\omega))\,d\omega .
\]Multiplying by \(r^{n-1}\) and integrating
over \(r\in(0,\infty)\), we obtain
\[
\int_{\mathbb R^n}(\psi,D_{g_{\mathbb R^n}}\varphi)\,dx
=
\int_{\mathbb R^n}(\psi,D_{g_{\mathbb R^n}}\Pi\varphi)\,dx.
\]
Since $K$ and $|\psi|$ are radial, the same argument applies to the nonlinear term:
\[
\int_{\mathbb R^n}
(1+\varepsilon K(r))|\psi|^{2^*-2}( \psi,\varphi)\,dx
=
\int_{\mathbb R^n}
(1+\varepsilon K(r))|\psi|^{2^*-2}( \psi,\Pi\varphi)\,dx.
\]
Hence
\(
\mathcal R_\psi(\varphi)=\mathcal R_\psi(\Pi\varphi)
\)
for \(\psi\in  \mathcal E_{\gamma_0}(\mathbb R^n)\).
For a general \(\psi\in E_{\gamma_0}\), take \(\psi_k\in  \mathcal E_{\gamma_0}(\mathbb R^n)\) with
\(\psi_k\to\psi\) in \(D^{1/2}\). Then \(\psi_k\to\psi\) in \(L^{2^*}\), and
\(
|\psi_k|^{2^*-2}\psi_k
\to
|\psi|^{2^*-2}\psi\)
in \(L^{\frac{2n}{n+1}}.
\)
Therefore, we have
$
\mathcal R_{\psi_k}(\varphi)\to\mathcal R_\psi(\varphi)
\text{ and }
\mathcal R_{\psi_k}(\Pi\varphi)\to\mathcal R_\psi(\Pi\varphi).
$
Passing to the limit yields \eqref{resolve-equiv}.
Since \(\psi\) is a critical point of \(F_\varepsilon^{\gamma_0}\), we have
$
(F_\varepsilon^{\gamma_0})'(\psi)[\eta]=0
\text{ for any }
 \eta\in E_{\gamma_0}.
$
In particular, taking \(\eta=\Pi\varphi\in  \mathcal E_{\gamma_0}(\mathbb R^n)\subset E_{\gamma_0}\), we get
\(
\mathcal R_\psi(\Pi\varphi)=0.
\)
Therefore
\[
\mathcal R_\psi(\varphi)=0
\qquad
\forall
\varphi\in C_c^\infty(\mathbb R^n\setminus\{0\},\Sigma\mathbb R^n).
\]

It remains to remove the restriction that the test spinor vanish near the
origin. Let
\(
\varphi\in C_c^\infty(\mathbb R^n,\Sigma\mathbb R^n)
\)
be arbitrary. Choose a smooth cut-off function \(\chi_\delta\) such that
\[
\chi_\delta(x)=0\quad\text{for }|x|\le\delta,
\qquad
\chi_\delta(x)=1\quad\text{for }|x|\ge2\delta,
\qquad
|\nabla\chi_\delta|\le \frac{C}{\delta}.
\]
Then
\(
\chi_\delta\varphi\in
C_c^\infty(\mathbb R^n\setminus\{0\},\Sigma\mathbb R^n),
\)
and thus
\(
\mathcal R_\psi(\chi_\delta\varphi)=0.
\)
Next we show that
\[
\mathcal R_\psi((1-\chi_\delta)\varphi)\to0
\qquad
\text{as }\delta\to0.
\]
For the first part, by Hölder's inequality,
\[
\left|
\int_{\mathbb R^n}
(\psi,D_{g_{\mathbb R^n}}\bigl((1-\chi_\delta)\varphi\bigr))\,dx
\right|
\le
\|\psi\|_{L^{2^*}(B_{2\delta})}
\left\|
D_{g_{\mathbb R^n}}\bigl((1-\chi_\delta)\varphi\bigr)
\right\|_{L^{\frac{2n}{n+1}}(B_{2\delta})}.
\]
Since \(\psi\in L^{2^*}\), we have
\(
\|\psi\|_{L^{2^*}(B_{2\delta})}\to0.
\)
Moreover,
\(
D\bigl((1-\chi_\delta)\varphi\bigr)
=
(1-\chi_\delta)D\varphi
-
(\nabla\chi_\delta)\cdot\varphi,
\)
and the \(L^{\frac{2n}{n+1}}\)-norm of the right-hand side is bounded uniformly in
\(\delta\),
while
\[
\|(\nabla\chi_\delta)\cdot\varphi\|_{L^{\frac{2n}{n+1}}(B_{2\delta})}
\le
C\delta^{-1}|B_{2\delta}|^{\frac{n+1}{2n}}
=
C\delta^{\frac{n-1}{2}},
\]
which tends to \(0\) for \(n\ge2\). Hence the first part
tends to \(0\).
For the nonlinear part, since
\(
(1+\varepsilon K)|\psi|^{2^*-2}\psi\in L^{\frac{2n}{n+1}},
\)
we have
\[
\left|
\int_{\mathbb R^n}
(1+\varepsilon K(r))|\psi|^{2^*-2}
(\psi,(1-\chi_\delta)\varphi)\,dx
\right|  
\le
C
\||\psi|^{2^*-1}\|_{L^{\frac{2n}{n+1}}(B_{2\delta})}
\|(1-\chi_\delta)\varphi\|_{L^{2^*}(B_{2\delta})}
\to0.
\]
Thus
\(
\mathcal R_\psi((1-\chi_\delta)\varphi)\to0
\) and
we obtain
\[
\mathcal R_\psi(\varphi)
=
\lim_{\delta\to0}
\mathcal R_\psi(\chi_\delta\varphi)
=0 
\qquad
\forall\varphi\in C_c^\infty(\mathbb R^n,\Sigma\mathbb R^n).
\]
Hence \(\psi\) is a weak solution of equation \eqref{(1.1)}.
\end{proof}

A direct computation shows that the restricted functional \(F_0^{\gamma_0}\) possesses the
following one-dimensional critical manifold
\begin{equation}\mathcal{M}_{\gamma_0 }=\left\{\left.\psi_{\lambda}(x) = \frac{2^{\frac{n-1}{2}}\lambda^{\frac{n-1}{2}}}{(\lambda^2+|x|^2)^{\frac{n}{2}}} (\lambda -x)\cdot \gamma_0 , \lambda>0\right\}\right.. \end{equation}
To study the Hessian of $F_0^{\gamma_0}$ along
$\mathcal M_{\gamma_0}$, we transfer the problem to the
round sphere via the stereographic map $I$ introduced
in Section~\ref{configuration-spaces}. Define
\[
\mathcal W_{\gamma_0}(\mathbb{S}^n)
:=I\bigl(\mathcal E_{\gamma_0}(\mathbb R^n)\bigr),
\qquad
W_{\gamma_0}:=I(E_{\gamma_0}).
\]
Here $\mathcal W_{\gamma_0}(\mathbb{S}^n)$ is the image of the smooth
class $\mathcal E_{\gamma_0}(\mathbb R^n)$.
Since $I$ is an isometric isomorphism,
\[
W_{\gamma_0}
=
\overline{\mathcal W_{\gamma_0}(\mathbb{S}^n)}
^{\,H^{1/2}(\mathbb S^n,\Sigma\mathbb S^n)}.
\]
Thus $W_{\gamma_0}$ is a closed real subspace of
$H^{1/2}(\mathbb S^n,\Sigma\mathbb S^n)$ and hence a real
Hilbert space with the induced inner product.
Moreover, the restriction
$I|_{E_{\gamma_0}}:E_{\gamma_0}\to W_{\gamma_0}$
is an isometric isomorphism.

Define the corresponding functional on the sphere by
\[
\mathcal L(\varphi):=F_0(I^{-1}\varphi),
\qquad
\varphi\in H^{1/2}(\mathbb S^n,\Sigma\mathbb S^n).
\]
By the conformal covariance of the Dirac operator,
\begin{equation}\label{eq:spherical-functional}
\mathcal L(\varphi)
=
\frac12\int_{\mathbb S^n}
(D_{g_{\mathbb S^n}}\varphi,\varphi)
\,d\mathrm{vol}_{g_{\mathbb S^n}}
-
\frac{n-1}{4}\int_{\mathbb S^n}
|\varphi|^{2^*}
\,d\mathrm{vol}_{g_{\mathbb S^n}}.
\end{equation}
The following lemma gives an explicit description of
$\mathcal W_{\gamma_0}$.
\begin{lem}\label{lem:Wtheta-characterization}
We have
\begin{equation}\label{eq:spherical-radial-form}
\mathcal W_{\gamma_0}(\mathbb{S}^n)
=
\left\{
\Psi(\theta,\omega)
=
g_1(\theta)\widetilde\gamma_0
+
g_2(\theta)\partial_\theta\cdot\widetilde\gamma_0
:
g_1,g_2\in C_c^\infty((0,\pi),\mathbb R)
\right\},
\end{equation}
where $(\theta,\omega)\in(0,\pi)\times\mathbb S^{n-1}$
are geodesic polar coordinates, $\partial_\theta$
is the radial unit vector field and $\widetilde\gamma_0:=\mathcal H(\gamma_0)$. 
The dependence of $\widetilde\gamma_0$ on
$(\theta,\omega)$ is suppressed for simplicity.
\end{lem}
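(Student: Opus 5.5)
We compute $I\psi$ directly for $\psi=f_1(r)\gamma_0+f_2(r)\frac{x}{r}\cdot\gamma_0\in\mathcal E_{\gamma_0}(\mathbb R^n)$ and then read off the two coefficient functions in geodesic polar coordinates on $\mathbb S^n$. We use polar coordinates centered at the south pole $S=\pi_N^{-1}(0)$. The stereographic projection then sends the point with coordinates $(\theta,\omega)$ to $x=r\omega$, where $r=\tan(\theta/2)$. This map $\theta\mapsto r$ is a diffeomorphism $(0,\pi)\to(0,\infty)$, and $f(x)=\frac{2}{1+r^2}=2\cos^2(\theta/2)=1+\cos\theta$ depends only on $\theta$. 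We write $\widetilde\gamma_0(q):=\mathcal H_{\pi_N(q)}(\gamma_0)$.

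The key step is the compatibility of $\mathcal H$ with Clifford multiplication. The spin identification induced by the conformal map intertwines Clifford multiplication by a Euclidean unit vector $v$ at $x$ with Clifford multiplication by its image under $d\pi_N^{-1}$, renormalized by the conformal factor $f$. In formulas,
\[
\mathcal H_x(v\cdot\gamma)=\bigl(f(x)^{-1}(d\pi_N^{-1})_x v\bigr)\cdot_{g_{\mathbb S^n}}\mathcal H_x(\gamma).
\]
This is the standard property of the isometry of spinor bundles for conformally related metrics $f^2g_{\mathbb R^n}$ and $g_{\mathbb R^n}$, as in Hijazi's and Ammann's conformal covariance. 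Since $\pi_N^{-1}$ maps Euclidean rays from $0$ to meridians from $S$, it sends $\partial_r$ to a positive multiple of $\partial_\theta$. After normalization we get $f^{-1}d\pi_N^{-1}(\frac{x}{r})=\partial_\theta$. Hence
\[
I\psi=f^{-\frac{n-1}{2}}\Bigl(f_1(r)\widetilde\gamma_0+f_2(r)\,\partial_\theta\cdot\widetilde\gamma_0\Bigr),
\]
and we set $g_i(\theta):=(1+\cos\theta)^{-\frac{n-1}{2}}f_i(\tan(\theta/2))$.

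Both inclusions now follow. If $f_i\in C_c^\infty((0,\infty))$, then $g_i\in C_c^\infty((0,\pi))$, because composition with a diffeomorphism and multiplication by a smooth positive function on $(0,\pi)$ preserve compact support and smoothness. Conversely, given $g_i\in C_c^\infty((0,\pi))$, we set $f_i(r):=f(r)^{\frac{n-1}{2}}g_i(2\arctan r)$, which lies in $C_c^\infty((0,\infty))$. The same computation then gives $I\psi=\Psi$. Away from the support we have $\psi=0$ near $0$, and $\Psi=0$ near both poles, so no issue arises at $S$ or at $N$.

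The main obstacle is the Clifford compatibility identity. In particular, we must justify that $\mathcal H$ sends $\frac{x}{r}\cdot$ to $\partial_\theta\cdot$ with the correct sign, rather than $-\partial_\theta$, under the chosen orientation. We would handle this by recalling the construction of $\mathcal H$ from the $\mathrm{Spin}(n)$-equivariant identification of the orthonormal frame bundles: $e_j\mapsto f^{-1}e_j$. Under this identification, Clifford multiplication by frame vectors corresponds tautologically. We would then verify the sign of $d\pi_N^{-1}(\partial_r)$ against $\partial_\theta$ directly from the explicit formula for $\pi_N^{-1}$. If the sign were reversed, we would simply replace $g_2$ by $-g_2$, so the set in \eqref{eq:spherical-radial-form} is unchanged either way.
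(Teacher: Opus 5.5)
Your proposal is correct and follows essentially the same route as the paper: compute $I\psi$ via the Clifford compatibility $\mathcal H(X\cdot\gamma_0)=\bigl(f^{-1}d\pi_N^{-1}X\bigr)\cdot\widetilde\gamma_0$, identify the image of the radial field with $\pm\partial_\theta$, and conclude with the diffeomorphism $(0,\pi)\to(0,\infty)$ on the coefficient functions.

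The differences are cosmetic. The paper centers the polar coordinates at $N$, so that $r=\cot\frac{\theta}{2}$ and $f=2\sin^2\frac{\theta}{2}$. It obtains $f^{-1}d\pi_N^{-1}(X)=-\cot\frac{\theta}{2}\,\partial_\theta$ by an explicit ambient-coordinate computation and absorbs the sign into $g_2$. You center at $S$ and get $+\partial_\theta$ from conformality and the ray-to-meridian argument. Be aware that the paper's later formula $\widetilde\psi_1=\sin\frac{\theta}{2}\,\widetilde\gamma_0+\cos\frac{\theta}{2}\,\partial_\theta\cdot\widetilde\gamma_0$ uses the $N$-centered convention.
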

\begin{proof}
For $\varphi \in \mathcal E_{\gamma_0}(\mathbb R^n)$, assume that $$
\varphi(x)
=f_{1}(r)\,\gamma_0
+
\frac{f_{2}(r)}{r}\,x\cdot \gamma_0,
\qquad f_1,f_2\in C_c^\infty((0,\infty),\mathbb R).
$$
Let $\pi_N: \mathbb{S}^n \backslash\{N\} \rightarrow \mathbb{R}^n$ be the stereographic projection, where $N \in \mathbb{S}^n$ is the north pole. Using the ambient coordinate $\mathbb{S}^n \subset \mathbb{R}^{n+1}\left(y^1, \ldots, y^{n+1}\right)$ and $\mathbb{R}^n\left(x^1, \ldots, x^n\right)$, we have
$$
\mathbb{S}^n \backslash\{N\} \ni y \mapsto \pi_N(y)=x \in \mathbb{R}^n, \quad \text { with } x^i=\frac{ y^i}{1-y^{n+1}} \quad i=1,\dots,n.
$$
In spherical coordinates $(\theta,\phi_1,...,\phi_{n-1})$, we have $|x|=\frac{\sin \theta}{1-\cos\theta}=\cot\frac{\theta}{2}.$
 Hence, 
  \begin{equation}\label{mathH}
  	\mathcal{H}(\varphi \circ \pi_N)=f_{1}(\cot\frac{\theta}{2})\mathcal{H}(\gamma_0)+\frac{f_{2}(\cot\frac{\theta}{2})}{\cot\frac{\theta}{2}}\mathcal{H}(x\cdot\gamma_0),\end{equation}
  where \(\mathcal H\) denotes the bundle isometry whose fiber map
  $ \mathcal H_x:
  \Sigma_x\mathbb R^n
  \longrightarrow
  \Sigma_{\pi_N^{-1}(x)}\mathbb S^n
  $ was introduced in Section~\ref{configuration-spaces}. 
 Set
 $\widetilde\gamma_0:=\mathcal{H}(\gamma_0).$
 Since \(|\gamma_0|=1\), we have
 $|\widetilde\gamma_0|=|\gamma_0|=1.$

To compute $\mathcal{H}(x\cdot \gamma_0)$, we introduce the vector field
$
X(x):=\sum_{i=1}^n x_i\frac{\partial}{\partial x_i}.
$
 Under the canonical identification
$T_x\mathbb R^n\simeq \mathbb R^n$, the position vector $x$ is identified with the tangent vector
$X(x)\in T_x\mathbb R^n$. Hence
$
x\cdot \gamma_0 = X\cdot \gamma_0.
$
By the compatibility of $\mathcal{H}$ with Clifford multiplication, we therefore have
$$
\mathcal{H}(x\cdot \gamma_0)=\mathcal{H}(X\cdot \gamma_0)=\widetilde X\cdot \tilde\gamma_0,
$$
where
$
\widetilde X:=f^{-1}(x)\,d\pi_N^{-1}(X)
$
is the corresponding vector field on $\mathbb{S}^n\setminus\{N\}$.
 The inverse of the stereographic projection is given by $$\pi_N^{-1}(x)
=
\left(
\frac{2x_1}{1+|x|^2},
\dots,
\frac{2x_n}{1+|x|^2},
\frac{|x|^2-1}{1+|x|^2}
\right).$$ 
 \\Therefore,
\[d\pi_N^{-1}(X)=\sum_{i=1}^{n}\frac{2x_i(1-|x|^2)}{(1+|x|^2)^2}\frac{\partial}{\partial y_i}+
\frac{4|x|^2}{(1+|x|^2)^2}\frac{\partial}{\partial y_{n+1}},\]
and $$\widetilde X=f^{-1}(x)\,d\pi_N^{-1}(X)=\sum_{i=1}^{n}\frac{x_i(1-|x|^2)}{1+|x|^2}\frac{\partial}{\partial y_i}+
\frac{2|x|^2}{1+|x|^2}\frac{\partial}{\partial y_{n+1}}.$$
 Write
$
y=(\sin\theta\,\omega,\cos\theta)
$
with
$
\omega
=
(\cos\phi_1,\,
\sin\phi_1\cos\phi_2,\,
\dots,\,
\prod_{j=1}^{n-2}\sin\phi_j\cos\phi_{n-1},\,
\prod_{j=1}^{n-1}\sin\phi_j).
$
Then
$$
x_i=\frac{y_i}{1-y_{n+1}}=\frac{\sin\theta\,\omega_i}{1-\cos\theta}=\cot\frac{\theta}{2}\,\omega_i.
$$
Thus, we obtain
\begin{equation}\label{mathHx}
\widetilde X
=
-\sum_{i=1}^{n}\cos\theta\cot\frac{\theta}{2}\omega_i\frac{\partial}{\partial y_i}
+
2\cos^2\frac{\theta}{2}\frac{\partial}{\partial y_{n+1}}
=
-\cot\frac{\theta}{2}\frac{\partial}{\partial\theta}.
\end{equation}
and
$
\mathcal{H}(x\cdot \gamma_0)
=
\widetilde X\cdot\tilde \gamma_0
=
-\cot\frac{\theta}{2}\,\partial_\theta\cdot \tilde \gamma_0.
$
Consequently, by \eqref{stereographic-co}, \eqref{mathH} and \eqref{mathHx}, we have
$$
I(\varphi)
=
f^{-\frac{n-1}{2}}\mathcal{H}(\varphi\circ\pi_N)
=
\left(2\sin^2\frac{\theta}{2}\right)^{-\frac{n-1}{2}}
\left(
f_{1}\!\left(\cot\frac{\theta}{2}\right)\tilde\gamma_0
-f_{2}\!\left(\cot\frac{\theta}{2}\right)\partial_\theta\cdot \tilde\gamma_0
\right).$$
 Setting
$
g_{1}(\theta)
=
\left(2\sin^2\frac{\theta}{2}\right)^{-\frac{n-1}{2}}
f_{1}\!\left(\cot\frac{\theta}{2}\right)
\text{ and }
g_{2}(\theta)
=
-
\left(2\sin^2\frac{\theta}{2}\right)^{-\frac{n-1}{2}}
f_{2}\!\left(\cot\frac{\theta}{2}\right),
$
we conclude that
$$
I(\varphi)
=
g_{1}(\theta)\tilde\gamma_0
+g_{2}(\theta)\partial_\theta\cdot \tilde\gamma_0.
$$
Since the change of variable $r=\cot\frac{\theta}{2}$ is a smooth diffeomorphism from \((0,\pi)\) onto
\((0,\infty)\), and the conformal factor is smooth and positive, the above
correspondence between \((f_1,f_2)\) and \((g_1,g_2)\) is bijective.
Therefore, the conclusion follows.

\end{proof}
\subsection{The fixed-spinor projection}
We now introduce the fixed-spinor projection $P$ and establish
the properties needed to analyze the restricted Hessian. We continue to use the geodesic polar coordinates
$(\theta,\omega)\in(0,\pi)\times\mathbb S^{n-1}$
and write $d\omega$ for the standard volume element
on $\mathbb S^{n-1}$.

Set
\[
E_1(\theta,\omega):=\widetilde\gamma_0(\theta,\omega),
\qquad
E_2(\theta,\omega)
:=\partial_\theta\cdot\widetilde\gamma_0(\theta,\omega).
\]
These are the two spinor fields appearing in
Lemma~\ref{lem:Wtheta-characterization}.
The Clifford relations give
\[
|E_1|=|E_2|=1,
\qquad
(E_1,E_2)=0.
\]
For \(\Phi\in L^2(\mathbb{S}^n,\Sigma \mathbb{S}^n)\), define for a.e
\(\theta\in(0,\pi)\),
\[
a_\Phi(\theta)
:=
\frac1{|\mathbb{S}^{n-1}|}
\int_{ \mathbb{S}^{n-1}}
(
\Phi(\theta,\omega),E_1(\theta,\omega))
\,d\omega,\qquad
b_\Phi(\theta)
:=
\frac1{| \mathbb{S}^{n-1}|}
\int_{\mathbb{S}^{n-1}}(
\Phi(\theta,\omega),E_2(\theta,\omega))
\,d\omega,
\]
and
\[
P\Phi(\theta,\omega)
:=
a_\Phi(\theta)E_1(\theta,\omega)
+
b_\Phi(\theta)E_2(\theta,\omega).
\]For $s\in\{0,\tfrac12,1\}$, set
\[
W_{\gamma_0}^s
:=
\overline{\mathcal W_{\gamma_0}(\mathbb S^n)}
^{\,H^s(\mathbb S^n,\Sigma\mathbb S^n)},
\]
where $H^0(\mathbb S^n,\Sigma\mathbb S^n)=L^2(\mathbb S^n,\Sigma\mathbb S^n)$. In particular,
$W_{\gamma_0}^{1/2}=W_{\gamma_0}$.

\begin{lem}\label{lem:projection-fixed-gamma}
The projection $P$ has the following properties:
\begin{enumerate}
\item[(i)]
$P$ is the $L^2$-orthogonal projection onto
$W_{\gamma_0}^0$.

\item[(ii)]
For $s=0, \tfrac12,1$,
\(
P\bigl(H^s(\mathbb S^n,\Sigma\mathbb S^n)\bigr)
=W_{\gamma_0}^s,
\)
and there exists $C>0$ such that
\[
\|P\Phi\|_{H^s}
\le C\|\Phi\|_{H^s},\qquad \forall\,
\Phi\in H^s(\mathbb S^n,\Sigma\mathbb S^n).
\]
\end{enumerate}
\end{lem}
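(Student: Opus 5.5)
The plan is to prove (i) by a direct Fubini argument. For (ii), the idea is to show that $P$ commutes with the self-adjoint operator $D:=D_{g_{\mathbb S^n}}$; boundedness on every $H^s$ then follows from functional calculus rather than from a direct estimate of derivatives of $E_1,E_2$. A direct estimate would be delicate, since $E_2=\partial_\theta\cdot\widetilde\gamma_0$ is singular at the poles.

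For (i), I use $d\mathrm{vol}_{g_{\mathbb S^n}}=\sin^{n-1}\theta\,d\theta\,d\omega$, $|E_i|=1$ and $(E_1,E_2)=0$. Cauchy--Schwarz on $\mathbb S^{n-1}$ shows that $a_\Phi,b_\Phi\in L^2((0,\pi),\sin^{n-1}\theta\,d\theta)$ and $\|P\Phi\|_{L^2}\le\|\Phi\|_{L^2}$. Fubini gives
\[
(\Phi-P\Phi,\;g_1E_1+g_2E_2)_{L^2}=0
\]
for all $g_1,g_2$ in this weighted $L^2$ space, and also $P(g_1E_1+g_2E_2)=g_1E_1+g_2E_2$. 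Since $C_c^\infty((0,\pi))$ is dense in the weighted $L^2$ space, the range of $P$ is exactly $W^0_{\gamma_0}$, by Lemma~\ref{lem:Wtheta-characterization}. Hence $P$ is the $L^2$-orthogonal projection onto $W^0_{\gamma_0}$.

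For (ii) the key step is the commutation identity
\[
PD\eta=DP\eta\qquad\text{for all }\eta\in C^\infty(\mathbb S^n,\Sigma\mathbb S^n).
\]
I first prove it for $\eta$ supported away from the poles $\theta=0,\pi$, in three steps.
\begin{enumerate}
\item Transferring \eqref{(1.0)} by \eqref{stereographic-covariance} (the factor $f$ is radial) shows that $D$ maps $\mathcal W_{\gamma_0}(\mathbb S^n)$ into itself.
\item The orthogonal complement behaves the same way. If $\chi$ is smooth, compactly supported away from the poles, and $P\chi=0$, then $(D\chi,\zeta)=(\chi,D\zeta)=0$ for every $\zeta\in\mathcal W_{\gamma_0}(\mathbb S^n)$, so $PD\chi=0$.
\item Writing $\eta=P\eta+(\eta-P\eta)$, both pieces are smooth away from the poles, because $a_\eta,b_\eta$ are smooth there. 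Applying the first two steps to the two pieces gives the identity.
\end{enumerate}
This is the spherical analogue of the angular-projection argument in Lemma~\ref{lem:natural-constraint-fixed-gamma}.

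To reach general $\eta$, I cut off near the poles with $\chi_\delta$ exactly as in that lemma. The estimates I need are that $\|D((1-\chi_\delta)\eta)\|_{L^2}$ stays bounded and that $(1-\chi_\delta)\eta\to0$ in $L^2$. These hold because the error term $\nabla\chi_\delta\cdot\eta$ has $L^2$ norm of order $\delta^{-1}\delta^{n/2}$, which stays bounded for $n\ge2$. Weak compactness then yields $D P\eta=PD\eta$ in the sense of distributions, and in particular $P\eta\in\operatorname{Dom}(D)$.

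From the commutation identity, $P$ maps $H^1$ into $H^1$. It commutes with $D$ on the core $C^\infty$, hence $PD\subset DP$. Since $P$ is an orthogonal projection, $W^0_{\gamma_0}$ is then a reducing subspace of the self-adjoint operator $D$. Therefore $P$ commutes with $|D|^s$ for every $s\ge0$, which gives $\|P\Phi\|_{H^s}\le\|\Phi\|_{H^s}$ with $C=1$.

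For the ranges, $P(H^s)\supset W^s_{\gamma_0}$ because $P$ fixes $\mathcal W_{\gamma_0}(\mathbb S^n)$ and is $H^s$-continuous. Conversely, $P(C^\infty)$ lies in $W^s_{\gamma_0}$: one approximates $a_\eta,b_\eta$ by functions in $C_c^\infty((0,\pi))$ via the same cutoff near the poles, controlling the $H^1$ norm by the $D$-bound above. Density of $C^\infty$ in $H^s$ then gives $P(H^s)\subset W^s_{\gamma_0}$.

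The main obstacle is the behaviour at the poles: showing that $P\eta\in H^1$ and that $P\eta$ is an $H^1$-limit of elements of $\mathcal W_{\gamma_0}(\mathbb S^n)$. My first attempt will be the capacity-type cutoff estimate above, which works because $n\ge2$ makes the error $\delta^{n/2-1}$ bounded, and combined with weak lower semicontinuity this should suffice. If the borderline case $n=2$ causes trouble, I would use logarithmic cutoffs instead.
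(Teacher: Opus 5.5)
Your proof is correct. It follows the paper on (i) and on the commutation identity for spinors supported away from the poles, but it handles the poles and the passage to $H^{1/2}$ differently.

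\textbf{The paper's route.} The paper first proves that $\mathscr C=C_c^\infty(\mathbb S^n\setminus\{N,S\},\Sigma\mathbb S^n)$ is dense in $H^1$. This needs a logarithmic cutoff when $n=2$, because the ordinary cutoff leaves an error that is bounded but does not vanish. It then extends $\|P\Phi\|_{H^1}\le\|\Phi\|_{H^1}$ from $\mathscr C$ by strong density. It obtains $H^{1/2}$ by real interpolation between $L^2$ and $H^1$. Finally it gets $P(H^s)\subset W^s_{\gamma_0}$ from $P(\mathscr C)\subset\mathcal W_{\gamma_0}(\mathbb S^n)$ and strong density again.

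\textbf{Your route.} You pass from $\mathscr C$ to $C^\infty$ with the ordinary cutoff. You use only that $\nabla\chi_\delta\cdot\eta$ is bounded in $L^2$, hence weakly null since it is supported on shrinking sets, together with the closedness of $D$. This removes the need for the logarithmic cutoff. Your reducing-subspace argument then gives $P|D|^s\subset|D|^sP$ for all $s\ge0$. That yields the bound with $C=1$ without interpolation, and it already contains what the paper proves separately in Lemma~\ref{lem:commute} and Corollary~\ref{cor:commute-borel}.

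\textbf{A correction to the range step.} The tool you need there is not weak lower semicontinuity. Take $\chi_\delta$ depending only on $\theta$, so that $P(\chi_\delta\eta)=\chi_\delta P\eta\in\mathcal W_{\gamma_0}(\mathbb S^n)$. These spinors are bounded in $H^1$ by your $D$-bound and converge to $P\eta$ in $L^2$, hence weakly in $H^1$. The norm-closed subspace $W^1_{\gamma_0}$ is weakly closed (Mazur), so $P\eta\in W^1_{\gamma_0}\subset W^s_{\gamma_0}$. Density of $C^\infty$ in $H^s$ then finishes the argument. For $n\ge3$ the convergence is actually strong, so only $n=2$ needs this step.

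\textbf{Trade-off.} The paper's approach produces the strong density of $\mathscr C$ in every $H^s$, $0\le s\le1$, which it reuses later. Yours is shorter, treats all dimensions uniformly, and gives the sharp constant.
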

\begin{proof}
\noindent\textbf{Step 1. The $L^2$-orthogonal projection property.}

	We first prove that \(P\) is the \(L^2\)-orthogonal projection onto \( W_{\gamma_0}^0\). Since \(E_1\) and \(E_2\) are pointwise
orthonormal, we have
	\[\|P\Phi\|^2_{L^2}=\int_{0}^\pi\int_{ \mathbb{S}^{n-1}}|P\Phi(\theta,\omega)|^2\sin^{n-1}\theta\, d\omega d\theta=|\mathbb{S}^{n-1}|\int_0^\pi (|a_\Phi(\theta)|^2+|b_\Phi(\theta)|^2) \sin^{n-1}\theta\,d\theta.\]
  By the Cauchy--Schwarz inequality and the pointwise
orthonormality of $E_1$ and $E_2$, for a.e. $\theta$,
\[
\begin{aligned}
|a_\Phi(\theta)|^2+|b_\Phi(\theta)|^2
&\le
\frac{1}{|\mathbb S^{n-1}|}
\int_{\mathbb S^{n-1}}
\left[
(\Phi(\theta,\omega),E_1(\theta,\omega))^2
+
(\Phi(\theta,\omega),E_2(\theta,\omega))^2
\right]\,d\omega
\\
&\le
\frac{1}{|\mathbb S^{n-1}|}
\int_{\mathbb S^{n-1}}
|\Phi(\theta,\omega)|^2\,d\omega.
\end{aligned}
\]
Integrating with respect to
$\sin^{n-1}\theta\,d\theta$, we obtain
\[
\|P\Phi\|_{L^2}
\le \|\Phi\|_{L^2}.
\]
  In particular, \(a_\Phi,b_\Phi\in L^2\bigl((0,\pi),\sin^{n-1}\theta\,d\theta\bigr). \) Choose \(a_k,b_k\in C_c^\infty((0,\pi),
  \mathbb R)\) such that \( a_k\to a_\Phi\), \(b_k\to b_\Phi \) in \(L^2((0,\pi),\sin^{n-1}\theta\,d\theta)\). Then \( a_k(\theta)E_1+b_k(\theta)E_2\in \mathcal W_{\gamma_0}(\mathbb{S}^n) \) and converges to \(P\Phi\) in \(L^2(\mathbb{S}^n,\Sigma \mathbb{S}^n)\). It follows that  $$
P\Phi\in
\overline{\mathcal W_{\gamma_0}(\mathbb{S}^n)}^{\,L^2}
=
W_{\gamma_0}^0.
$$
  It remains to prove the orthogonality. Let \(\Psi\in \mathcal W_{\gamma_0}(\mathbb{S}^n)\) and write \[ \Psi(\theta,\omega)=a(\theta)E_1(\theta,\omega)+b(\theta)E_2(\theta,\omega), \qquad a,b\in C_c^\infty((0,\pi), \mathbb R). \] By the definitions of $a_\Phi$ and $b_\Phi$,
\[
\int_{\mathbb S^{n-1}}
\bigl(
\Phi(\theta,\omega)-P\Phi(\theta,\omega),
E_j(\theta,\omega)
\bigr)\,d\omega
=0,
\qquad j=1,2,
\]
for a.e. $\theta$.
Multiplying these identities by $a(\theta)$ and
$b(\theta)$, respectively, and integrating with respect
to $\sin^{n-1}\theta\,d\theta$, we obtain
\[
(\Phi-P\Phi,\Psi)_{L^2}=0.
\]Since \(\mathcal W_{\gamma_0}(\mathbb{S}^n)\) is dense in \( W_{\gamma_0}^0\), the same identity holds for all \(\Psi\in W_{\gamma_0}^0\). Thus
\(P\Phi\in W_{\gamma_0}^0\) and
\(\Phi-P\Phi\in(W_{\gamma_0}^0)^\perp\), which proves that \(P\) is the
\(L^2\)-orthogonal projection onto \(W_{\gamma_0}^0\). In
particular,
$
\|P\Phi\|_{L^2}
\leq
\|\Phi\|_{L^2}.$

\noindent\textbf{Step 2. Boundedness on $H^1$ and $H^{1/2}$.}

Let $N,S\in \mathbb{S}^n$ be the two poles and set
$
  \mathscr C
  :=C_c^\infty\bigl(\mathbb{S}^n\setminus\{N,S\},\Sigma \mathbb{S}^n\bigr).
$
We first claim that
\begin{equation}\label{N,S}
  \overline{\mathscr C}^{\,H^1(\mathbb{S}^n,\Sigma \mathbb{S}^n)}
  =
  H^1(\mathbb{S}^n,\Sigma \mathbb{S}^n).
\end{equation}
Since smooth spinors are dense in $H^1$, it suffices
to approximate an arbitrary
$\Phi\in C^\infty(\mathbb S^n,\Sigma\mathbb S^n)$
by elements of $\mathscr C$.
We construct smooth cut-off functions $\chi_\rho$
vanishing near both poles such that
\[
\|\chi_\rho\Phi-\Phi\|_{H^1}\longrightarrow0
\qquad\text{as }\rho\to0.
\]
Suppose first that \(n\geq3\). 
For \(p\in\{N,S\}\) and sufficiently small \(\rho>0\), consider \(\chi_{\rho,p}\in C^\infty(\mathbb{S}^n,[0,1])\) such that
\[
  \chi_{\rho,p}=0
  \quad\text{on }B_\rho(p),
  \qquad
  \chi_{\rho,p}=1
  \quad\text{on }\mathbb{S}^n\setminus B_{2\rho}(p),\qquad|\nabla\chi_{\rho,p}|
  \leq
  \frac{C}{\rho},
\]
and therefore
\[
  \int_{\mathbb{S}^n}|\nabla\chi_{\rho,p}|^2\,d\mathrm{vol}_{g_{\mathbb S^n}}
\le
  \frac{C}{\rho^2}
  \int_\rho^{2\rho}(\sin r)^{n-1}\,dr\leq
  C\rho^{n-2}
  \longrightarrow0.
\]
Suppose now that \(n=2\). In this case we use a logarithmic cut-off. For sufficiently small $\rho>0$, choose
$\chi_{\rho,p}\in C^\infty(\mathbb S^2,[0,1])$ such that
\[
  \chi_{\rho,p}=0
  \quad\text{on }B_{\rho^2}(p),
  \qquad
  \chi_{\rho,p}=1
  \quad\text{on }S^2\setminus B_\rho(p).
\]
On the transition annulus
\(
  \rho^2<r<\rho,
\)
one has
$
  |\nabla\chi_{\rho,p}|
  \leq
  \frac{C}{r\log(1/\rho)}.
$
Consequently,
\[
  \int_{S^2}|\nabla\chi_{\rho,p}|^2\,d\mathrm{vol}_{g_{\mathbb S^2}}
\le
  \frac{C}{\log^2(1/\rho)}
  \int_{\rho^2}^{\rho}
  \frac{\sin r}{r^2}\,dr\leq
  \frac{C}{\log(1/\rho)}
  \longrightarrow0.
\]
Applying the preceding construction at the two poles, define
\(
\chi_\rho
:=
\chi_{\rho,N}\chi_{\rho,S}.
\)
Then \(\chi_\rho\in C_c^\infty\bigl(\mathbb{S}^n\setminus\{N,S\},R\bigr)\) and satisfies
\[
  \chi_\rho=1
  \quad\text{on}\quad
  \mathbb{S}^n\setminus
  \bigl(B_{2\rho}(N)\cup B_{2\rho}(S)\bigr),
\]
with
\[
  \|\nabla\chi_\rho\|_{L^2}^2
  \leq
  \begin{cases}
    C\rho^{n-2}, & n\geq3,\\[1mm]
    \displaystyle\frac{C}{\log(1/\rho)}, & n=2.
  \end{cases}
\]
Since $\Phi$ is smooth and $\chi_\rho=1$ outside
$B_{2\rho}(N)\cup B_{2\rho}(S)$, the product rule
and the preceding estimates give
\[
\|\chi_\rho\Phi-\Phi\|_{H^1}\longrightarrow0.
\]
As $\chi_\rho\Phi\in\mathscr C$, this proves \eqref{N,S}.

Since smooth spinors are dense in $H^s$ and the embedding
$H^1\hookrightarrow H^s$ is continuous for $0\le s\le1$,
the same approximation yields
$$\overline{\mathscr C}^{\,H^s(\mathbb{S}^n,\Sigma \mathbb{S}^n)}
  =
  H^s(\mathbb{S}^n,\Sigma \mathbb{S}^n),\quad
  0\leq s\leq1.$$

 We next prove the following commutation identity:
\begin{equation}
  D_{g_{\mathbb{S}^n}}P\Phi=PD_{g_{\mathbb{S}^n}}\Phi,
  \qquad \Phi\in\mathscr C.
  \label{eq:core-commutation}
\end{equation}
If $\Phi\in\mathscr C$, then the functions $a_\Phi$ and $b_\Phi$ are
smooth and compactly supported in $(0,\pi)$. Hence,
\(
  P\Phi\in\mathcal W_{\gamma_0}(\mathbb{S}^n).
\)
Since $D_{g_{\mathbb{S}^n}}$ is a local first-order differential operator,
\(
  D_{g_{\mathbb{S}^n}}\Phi\in\mathscr C,
\)
and therefore $PD_{g_{\mathbb{S}^n}}\Phi\in\mathcal W_{\gamma_0}(\mathbb{S}^n)$.
By \eqref{eq:stereographic-intertwining}, the invariance of
\(\mathcal E_{\gamma_0}(\mathbb R^n)\) under
\(D_{g_{\mathbb R^n}}\) and the fact that the conformal factor is radial imply
\begin{equation}\label{eq:D-invariance-core}
D_{g_{\mathbb S^n}}
\bigl(\mathcal W_{\gamma_0}(\mathbb S^n)\bigr)
\subset
\mathcal W_{\gamma_0}(\mathbb S^n).
\end{equation}
It follows that both $D_{g_{\mathbb{S}^n}}P\Phi$ and $PD_{g_{\mathbb{S}^n}}\Phi$ belong to $\mathcal W_{\gamma_0}(\mathbb{S}^n)$. For
every $\varphi\in\mathcal W_{\gamma_0}(\mathbb{S}^n)$, the formal self-adjointness of $D$, the
$L^2$-projection property of $P$, and
\eqref{eq:D-invariance-core} give
\begin{align*}
  (D_{g_{\mathbb{S}^n}}P\Phi-PD_{g_{\mathbb{S}^n}}\Phi,\varphi)_{L^2}
  &=(P\Phi,D_{g_{\mathbb{S}^n}}\varphi)_{L^2}-(D_{g_{\mathbb{S}^n}}\Phi,\varphi)_{L^2}\\
  &=(\Phi,D_{g_{\mathbb{S}^n}}\varphi)_{L^2}-(D_{g_{\mathbb{S}^n}}\Phi,\varphi)_{L^2}\\
  &=0.
\end{align*}
Since $D_{g_{\mathbb{S}^n}}P\Phi-PD_{g_{\mathbb{S}^n}}\Phi\in\mathcal W_{\gamma_0}(\mathbb{S}^n)$, we may take
$\varphi=D_{g_{\mathbb{S}^n}}P\Phi-PD_{g_{\mathbb{S}^n}}\Phi$ and obtain \eqref{eq:core-commutation}.

For \(\Phi\in\mathscr C\), the spectral definition of the
\(H^1\)-norm, together with \eqref{eq:core-commutation}, gives
\[
\begin{aligned}
\|P\Phi\|_{H^1}^2
&=
\|P\Phi\|_{L^2}^2
+
\|D_{g_{\mathbb S^n}}P\Phi\|_{L^2}^2\\
&=
\|P\Phi\|_{L^2}^2
+
\|P D_{g_{\mathbb S^n}}\Phi\|_{L^2}^2\\
&\leq
\|\Phi\|_{L^2}^2
+
\|D_{g_{\mathbb S^n}}\Phi\|_{L^2}^2=
\|\Phi\|_{H^1}^2.
\end{aligned}
\]
Let now $\Phi\in H^1(\mathbb{S}^n,\Sigma \mathbb{S}^n)$ and choose
$\Phi_k\in\mathscr C$ such that $\Phi_k\to\Phi$ in $H^1$. The preceding
estimate shows that $P\Phi_k$ converges in $H^1$ to some spinor
$\widetilde\Phi$. On the other hand, since $P$ is bounded on $L^2$,
\[
  P\Phi_k\longrightarrow P\Phi
  \quad\text{in }L^2.
\]
Hence $\widetilde\Phi=P\Phi$, and therefore
\[
  \|P\Phi\|_{H^1}\leq C\|\Phi\|_{H^1}.
\]
Thus the original $L^2$-projection $P$ restricts to a bounded
projection on $H^1$.

Since $P$ is real-linear and bounded on both $L^2$ and $H^1$,
real interpolation gives its boundedness on $H^{1/2}$, using
\[
\bigl(L^2(\mathbb S^n,\Sigma\mathbb S^n),
H^1(\mathbb S^n,\Sigma\mathbb S^n)\bigr)_{1/2,2}
=H^{1/2}(\mathbb S^n,\Sigma\mathbb S^n)
\]
with equivalent norms.

\noindent\textbf{Step 3. The range of $P$.}

We now prove
\[
P(H^s(\mathbb{S}^n,\Sigma \mathbb{S}^n))= W_{\gamma_0}^s,
\qquad
s=0,\frac12,1.
\]
Let $\Psi\in W_{\gamma_0}^s$. By definition, there exist
$\Psi_k\in\mathcal W_{\gamma_0}(\mathbb S^n)$ such that
$\Psi_k\to\Psi$ in $H^s$.
Since $P\Psi_k=\Psi_k$ and $P$ is bounded on $H^s$,
passing to the limit gives $P\Psi=\Psi$.
Thus
\[
W_{\gamma_0}^s\subset P(H^s).
\]
Conversely, let
$\Phi\in H^s(\mathbb S^n,\Sigma\mathbb S^n)$.
By the density result above, choose $\Phi_k\in\mathscr C$
such that $\Phi_k\to\Phi$ in $H^s$.
Then $P\Phi_k\in\mathcal W_{\gamma_0}(\mathbb S^n)$,
and the boundedness of $P$ gives
$P\Phi_k\to P\Phi$ in $H^s$.
Hence $P\Phi\in W_{\gamma_0}^s$, so
\[
P(H^s)\subset W_{\gamma_0}^s.
\]
This completes the proof.
\end{proof}
\begin{lem}
\label{lem:commute}
For every $\Phi\in H^1(\mathbb S^n,\Sigma\mathbb S^n)$,
\(
D_{g_{\mathbb S^n}}P\Phi
=
PD_{g_{\mathbb S^n}}\Phi.
\)
\end{lem}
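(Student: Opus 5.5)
The plan is to extend the commutation identity \eqref{eq:core-commutation}, which was proved in Step~2 of Lemma~\ref{lem:projection-fixed-gamma} for spinors in the core $\mathscr C=C_c^\infty(\mathbb S^n\setminus\{N,S\},\Sigma\mathbb S^n)$, to all of $H^1$ by density. Two facts from Lemma~\ref{lem:projection-fixed-gamma} and its proof are needed. First, $\mathscr C$ is dense in $H^1(\mathbb S^n,\Sigma\mathbb S^n)$; this is \eqref{N,S}. Second, $P$ is bounded both on $L^2$ and on $H^1$. Finally, $D_{g_{\mathbb S^n}}:H^1\to L^2$ is bounded, since the $H^1$-norm is defined spectrally by $\|\Phi\|_{H^1}^2=\|\Phi\|_{L^2}^2+\|D_{g_{\mathbb S^n}}\Phi\|_{L^2}^2$.

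Fix $\Phi\in H^1$ and choose $\Phi_k\in\mathscr C$ with $\Phi_k\to\Phi$ in $H^1$. By \eqref{eq:core-commutation},
\[
D_{g_{\mathbb S^n}}P\Phi_k=PD_{g_{\mathbb S^n}}\Phi_k \qquad\text{for every }k.
\]
We pass to the limit on each side in $L^2$.

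\emph{Left side.} Boundedness of $P$ on $H^1$ gives $P\Phi_k\to P\Phi$ in $H^1$. Then boundedness of $D_{g_{\mathbb S^n}}:H^1\to L^2$ gives $D_{g_{\mathbb S^n}}P\Phi_k\to D_{g_{\mathbb S^n}}P\Phi$ in $L^2$. The expression $D_{g_{\mathbb S^n}}P\Phi$ makes sense because $P\Phi\in H^1=\operatorname{Dom}(D_{g_{\mathbb S^n}})$.

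\emph{Right side.} We have $D_{g_{\mathbb S^n}}\Phi_k\to D_{g_{\mathbb S^n}}\Phi$ in $L^2$. Since $P$ is the $L^2$-orthogonal projection, hence $L^2$-contractive, $PD_{g_{\mathbb S^n}}\Phi_k\to PD_{g_{\mathbb S^n}}\Phi$ in $L^2$.

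Uniqueness of $L^2$ limits then yields $D_{g_{\mathbb S^n}}P\Phi=PD_{g_{\mathbb S^n}}\Phi$.

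There is essentially no obstacle here, since the substantive work (density of $\mathscr C$ via cut-offs near the poles, and the core identity via invariance \eqref{eq:D-invariance-core}) was already done in Lemma~\ref{lem:projection-fixed-gamma}. The only point to check is that the bound $\|P\Phi\|_{H^1}\le C\|\Phi\|_{H^1}$ is really the bound for the $L^2$-projection $P$ itself, rather than for some abstract extension. This was verified in Step~2 by identifying the $H^1$-limit of $P\Phi_k$ with its $L^2$-limit $P\Phi$. If one preferred to avoid relying on that identification, one could argue weakly instead: for $\varphi\in\mathcal W_{\gamma_0}(\mathbb S^n)$, self-adjointness of $D_{g_{\mathbb S^n}}$ on $H^1$, the relation $P\varphi=\varphi$, and \eqref{eq:D-invariance-core} give $(D_{g_{\mathbb S^n}}P\Phi,\psi)_{L^2}=(PD_{g_{\mathbb S^n}}\Phi,\psi)_{L^2}$ for all $\psi$ in a dense set. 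This alternative route would also require density of $\mathcal W_{\gamma_0}(\mathbb S^n)\oplus(\text{its }L^2\text{ complement in }\mathscr C)$, so the density argument above is the one I would use.
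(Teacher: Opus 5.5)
Your proposal is correct and is essentially the paper's proof. Approximate $\Phi$ by $\Phi_k\in\mathscr C$ in $H^1$ and apply \eqref{eq:core-commutation}. Then pass to the $L^2$-limit, using $H^1$-boundedness of $P$ together with boundedness of $D_{g_{\mathbb S^n}}:H^1\to L^2$ on the left side, and $L^2$-boundedness of $P$ on the right side.

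Your closing weak-formulation aside is not needed. Its extra density requirement is also unnecessary, since $P\Phi\in W^1_{\gamma_0}$ and $D_{g_{\mathbb S^n}}(W^1_{\gamma_0})\subset W^0_{\gamma_0}$ would already suffice. It does not affect your main argument.
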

\begin{proof}
By Lemma~\ref{lem:projection-fixed-gamma}, $P$ is bounded on both $L^2$ and $H^1$.
Let $\Phi\in H^1(\mathbb{S}^n,\Sigma \mathbb{S}^n)$. By the density of \(\mathscr C\)
in \(H^1(\mathbb{S}^n,\Sigma \mathbb{S}^n)\), choose
$\Phi_k\in\mathscr C$ such that
\(
  \Phi_k\longrightarrow\Phi\) in \(H^1.
\)
The $H^1$-boundedness of $P$ gives
\(
  P\Phi_k\longrightarrow P\Phi
 \) in \(H^1,
\)
and hence
\(
  D_{g_{\mathbb S^n}}P\Phi_k\longrightarrow D_{g_{\mathbb S^n}}P\Phi
  \) in \(L^2.
\)
On the other hand,
\(
  D_{g_{\mathbb S^n}}\Phi_k\longrightarrow D_{g_{\mathbb S^n}}\Phi
  \) in \(L^2,
\)
so the $L^2$-boundedness of $P$ yields
\(
  PD_{g_{\mathbb S^n}}\Phi_k\longrightarrow PD_{g_{\mathbb S^n}}\Phi
  \) in \(L^2.
\)
By the commutation identity \eqref{eq:core-commutation},
\(
  D_{g_{\mathbb S^n}}P\Phi_k=PD_{g_{\mathbb S^n}}\Phi_k.
\)
Passing to the limit gives
\(
  D_{g_{\mathbb S^n}}P\Phi=PD_{g_{\mathbb S^n}}\Phi.
\)
\end{proof}
\begin{cor}\label{cor:commute-borel}
  $P$ commutes with $f(D_{g_{\mathbb S^n}})$ for every bounded real-valued Borel function $f$. In particular, with $A:=(|D_{g_{\mathbb S^n}}|+1)^{-1}D_{g_{\mathbb S^n}}$,
\[
  P(|D_{g_{\mathbb S^n}}|+1)^{-1}=(|D_{g_{\mathbb S^n}}|+1)^{-1}P,\qquad PA=AP,
\]
and consequently $W_{\gamma_0}$ is preserved by both operators:
\[
  (|D_{g_{\mathbb S^n}}|+1)^{-1}(W_{\gamma_0})\subset W_{\gamma_0},\qquad A(W_{\gamma_0})\subset W_{\gamma_0}.
\]
\end{cor}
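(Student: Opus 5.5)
The plan is to treat $D:=D_{g_{\mathbb S^n}}$ as a self-adjoint operator on $L^2(\mathbb S^n,\Sigma\mathbb S^n)$ with domain $H^1$. By Lemma~\ref{lem:projection-fixed-gamma}, $P$ is a bounded ($L^2$-orthogonal, hence self-adjoint) projection. By Lemma~\ref{lem:commute}, $P$ maps $\operatorname{Dom}(D)=H^1$ into itself and $DP\Phi=PD\Phi$ there. First I upgrade this to commutation with the resolvent. For $z\in\mathbb C\setminus\mathbb R$, work in the complexification, or alternatively with real $z\notin\operatorname{spec}(D)$, which suffices since the spectrum is discrete: $\pm(\frac n2+k)$. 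Given $\Phi\in L^2$, set $u:=(D-z)^{-1}\Phi\in H^1$. Then $(D-z)Pu=P(D-z)u=P\Phi$, so $P(D-z)^{-1}\Phi=(D-z)^{-1}P\Phi$. Because $P$ is only real-linear, I would use real $z$ in the resolvent set, where $(D-z)^{-1}$ is a real-linear operator.

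Since $\operatorname{spec}(D)$ is discrete, I can also argue directly with spectral projections. For each eigenvalue $\mu$, the orthogonal projection $\Pi_\mu$ onto $\ker(D-\mu)$ is the Riesz projection $\frac{1}{2\pi i}\oint (z-D)^{-1}dz$. More elementarily, $\Pi_\mu$ is a limit of polynomials in real resolvents. Simplest of all: if $D\Phi=\mu\Phi$, then $D P\Phi=PD\Phi=\mu P\Phi$, so $P$ preserves each eigenspace $V_\mu$. Being $L^2$-self-adjoint, $P$ also preserves $V_\mu^\perp$. Hence $P\Pi_\mu=\Pi_\mu P$ for every $\mu$.

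For a bounded real Borel $f$, the functional calculus gives $f(D)=\sum_\mu f(\mu)\Pi_\mu$, with the sum converging strongly in $L^2$. Since $P$ is bounded, $Pf(D)\Phi=\sum_\mu f(\mu)P\Pi_\mu\Phi=\sum_\mu f(\mu)\Pi_\mu P\Phi=f(D)P\Phi$. Taking $f(t)=(|t|+1)^{-1}$ and $f(t)=t(|t|+1)^{-1}$, both bounded and real, yields the two identities.

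For the invariance of $W_{\gamma_0}$, both operators are bounded on $H^{1/2}$. In fact $(|D|+1)^{-1}$ maps $H^{1/2}$ into $H^{3/2}\subset H^{1/2}$, and $A$ is a spectral multiplier of modulus $\le1$, so it is bounded on every $H^s$ in the spectral norm. By Lemma~\ref{lem:projection-fixed-gamma}(ii), $W_{\gamma_0}=P(H^{1/2})=\{\Psi\in H^{1/2}:P\Psi=\Psi\}$. Then for $\Psi\in W_{\gamma_0}$ and $T\in\{(|D|+1)^{-1},A\}$, we get $T\Psi\in H^{1/2}$ and $PT\Psi=TP\Psi=T\Psi$, so $T\Psi\in W_{\gamma_0}$.

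The main subtlety is the real-linearity of $P$, which prevents a direct appeal to the complex spectral theorem's commutant statement. I would handle this by working only with eigenspaces and real multipliers as above, which avoids complex resolvents entirely. The other point to check is that $P$ preserves $V_\mu^\perp$, which follows from the self-adjointness of $P$ with respect to the real inner product.
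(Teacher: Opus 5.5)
Your proposal is correct and follows essentially the same route as the paper: Lemma~\ref{lem:commute} gives that $P$ preserves each eigenspace, the spectral expansion $f(D_{g_{\mathbb S^n}})=\sum_\mu f(\mu)\Pi_\mu$ together with the $L^2$-boundedness of $P$ gives the commutation, and the invariance of $W_{\gamma_0}$ then follows from $W_{\gamma_0}=\{\Psi\in H^{1/2}:P\Psi=\Psi\}$ and the $H^{1/2}$-boundedness of both multipliers. Your explicit check that $P\Pi_\mu=\Pi_\mu P$, via invariance of $V_\mu^\perp$, spells out a step the paper compresses into ``by the spectral expansion'' (that invariance also follows by continuity from the invariance of the other eigenspaces, without using self-adjointness), and your resolvent and Riesz-projection digression is not needed.
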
\begin{proof}
  By Lemma~\ref{lem:commute}, if $D_{g_{\mathbb S^n}}\Phi=\lambda\Phi$, then
\(
D_{g_{\mathbb S^n}}P\Phi
=PD_{g_{\mathbb S^n}}\Phi
=\lambda P\Phi.
\)
Thus $P$ preserves each eigenspace of $D_{g_{\mathbb S^n}}$.
By the spectral expansion and the boundedness of $P$ on $L^2$,
it follows that $P$ commutes with $f(D_{g_{\mathbb S^n}})$
for every bounded real-valued Borel function $f$.
Taking
\(
f(\lambda)=\frac{1}{|\lambda|+1}\)
and \(
f(\lambda)=\frac{\lambda}{|\lambda|+1},
\)
respectively, we obtain
\[
P(|D_{g_{\mathbb S^n}}|+1)^{-1}=(|D_{g_{\mathbb S^n}}|+1)^{-1}P,
\qquad
PA=AP.
\]
We next verify that both operators are bounded on \(H^{1/2}\).
Let
\(
\Phi=\sum_{k=1}^\infty \alpha_k\varphi_k\in H^{1/2}\),
 \(D_{g_{\mathbb S^n}}\varphi_k=\lambda_k\varphi_k.
\)
By the spectral characterization of the \(H^{1/2}\)-norm,
\[
\|\Phi\|_{H^{1/2}}^2
=
\sum_{k=1}^\infty
(1+|\lambda_k|)|\alpha_k|^2.
\]
Since
\[
(|D_{g_{\mathbb S^n}}|+1)^{-1}\Phi
=
\sum_{k=1}^\infty
\frac{\alpha_k}{1+|\lambda_k|}\varphi_k,
\]
it follows that
\[
\bigl\|(|D_{g_{\mathbb S^n}}|+1)^{-1}\Phi\bigr\|_{H^{1/2}}^2
=
\sum_{k=1}^\infty
(1+|\lambda_k|)
\left|
\frac{\alpha_k}{1+|\lambda_k|}
\right|^2\leq
\sum_{k=1}^\infty
(1+|\lambda_k|)|\alpha_k|^2=
\|\Phi\|_{H^{1/2}}^2.
\]Similarly, since
\[
A\Phi
=
\sum_{k=1}^{\infty}
\frac{\lambda_k}{1+|\lambda_k|}
\alpha_k\varphi_k,
\]
we obtain
\[
\|A\Phi\|_{H^{1/2}}^2
\le \|\Phi\|_{H^{1/2}}^2.
\]
Thus both \((|D_{g_{\mathbb S^n}}|+1)^{-1}\) and \(A\) are bounded on \(H^{1/2}\).

Now let \(\Phi\in W_{\gamma_0}\).  By Lemma~\ref{lem:projection-fixed-gamma}, \(P\Phi=\Phi\).
Since $A$ and $(|D_{g_{\mathbb S^n}}|+1)^{-1}$ are bounded on $H^{1/2}$, we have
\[
  P(A\Phi)=A(P\Phi)=A\Phi
\]
and
\[
  P\bigl((|D_{g_{\mathbb S^n}}|+1)^{-1}\Phi\bigr)
  =(|D_{g_{\mathbb S^n}}|+1)^{-1}P\Phi
  =(|D_{g_{\mathbb S^n}}|+1)^{-1}\Phi.
\]
Both resulting spinors belong to $H^{1/2}$ and are fixed by $P$.
Therefore
\[
  A(W_{\gamma_0})\subset W_{\gamma_0},
  \qquad
  (|D_{g_{\mathbb S^n}}|+1)^{-1}(W_{\gamma_0})\subset W_{\gamma_0}.
\]
\end{proof}
\subsection{Fredholm property and non-degeneracy} We now use the properties of $P$ established above to study
$(F_0^{\gamma_0})''(\psi_\lambda)$ for
$\psi_\lambda\in\mathcal M_{\gamma_0}$.
We first prove that this operator is Fredholm of index 0
and then show that its kernel coincides with
$T_{\psi_\lambda}\mathcal M_{\gamma_0}$.
\begin{lem}
\label{lem:fredholm-property}
For any \(\psi_\lambda\in \mathcal{M}_{\gamma_0 }\), 
\(
(F_0^{\gamma_0})''(\psi_\lambda):E_{\gamma_0}\longrightarrow E_{\gamma_0}
\)
is a bounded self-adjoint Fredholm operator of index \(0\).
\end{lem}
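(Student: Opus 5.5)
We transfer the problem to the sphere and write the Hessian as a compact perturbation of an isomorphism. Since $I|_{E_{\gamma_0}}:E_{\gamma_0}\to W_{\gamma_0}$ is an isometric isomorphism and $\mathcal L=F_0\circ I^{-1}$, we have $(F_0^{\gamma_0})''(\psi_\lambda)=I^{-1}\circ(\mathcal L|_{W_{\gamma_0}})''(\varphi_\lambda)\circ I$ with $\varphi_\lambda:=I\psi_\lambda\in W_{\gamma_0}$. The Fredholm property and the index are invariant under this conjugation, so it suffices to work with $\mathcal L|_{W_{\gamma_0}}$. It is convenient to use on $H^{1/2}(\mathbb S^n,\Sigma\mathbb S^n)$ the equivalent inner product $\langle\Phi,\Psi\rangle:=((|D_{g_{\mathbb S^n}}|+1)^{1/2}\Phi,(|D_{g_{\mathbb S^n}}|+1)^{1/2}\Psi)_{L^2}$. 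By Corollary~\ref{cor:commute-borel}, $P$ commutes with spectral functions of $D_{g_{\mathbb S^n}}$, hence $P$ is also orthogonal for this inner product. Since Fredholmness and the index do not depend on the choice of equivalent inner product, we may use this one throughout.

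With respect to $\langle\cdot,\cdot\rangle$, the quadratic form $\int(D_{g_{\mathbb S^n}}h,k)$ is represented by $A=(|D_{g_{\mathbb S^n}}|+1)^{-1}D_{g_{\mathbb S^n}}$. The nonlinear part of the Hessian is the form
\[
b_\lambda(h,k):=\frac n2\int_{\mathbb S^n}\Bigl(|\varphi_\lambda|^{2^*-2}(h,k)+(2^*-2)|\varphi_\lambda|^{2^*-4}(\varphi_\lambda,h)(\varphi_\lambda,k)\Bigr),
\]
represented by an operator $B_\lambda$. Thus on the full space the Hessian is $A-B_\lambda$. The operator $A$ is bounded and self-adjoint. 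It is invertible on $H^{1/2}$, because $0\notin\operatorname{spec}(D_{g_{\mathbb S^n}})$ gives $|\lambda_k/(1+|\lambda_k|)|\ge \frac{n/2}{n/2+1}$. The operator $B_\lambda$ is compact. Indeed, $\varphi_\lambda$ is smooth on $\mathbb S^n$: it is the image of a standard bubble, i.e.\ a conformal image of a Killing spinor, so its weight $|\varphi_\lambda|^{2^*-2}$ is bounded. Hence $b_\lambda$ is a bounded form on $L^2$, and compactness follows from the compact embedding $H^{1/2}\hookrightarrow L^2$ on the compact sphere.

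Next we restrict to $W_{\gamma_0}$. The Hessian of the restriction $\mathcal L|_{W_{\gamma_0}}$ is $P(A-B_\lambda)|_{W_{\gamma_0}}$, using that $P$ is the $\langle\cdot,\cdot\rangle$-orthogonal projection onto $W_{\gamma_0}$. By Corollary~\ref{cor:commute-borel}, $A$ preserves $W_{\gamma_0}$. Self-adjointness of $A$ then shows that $A$ also preserves $W_{\gamma_0}^\perp$. The same reasoning applies to $A^{-1}=(|D|+1)D^{-1}$, again a spectral function commuting with $P$. Hence $A|_{W_{\gamma_0}}$ is an isomorphism of $W_{\gamma_0}$. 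The operator $PB_\lambda|_{W_{\gamma_0}}$ is compact, being the composition of a compact operator with bounded ones. Consequently the restricted Hessian equals (isomorphism) $-$ (compact), so it is Fredholm of index $0$. It is self-adjoint, since it is the Riesz representative of a symmetric bounded bilinear form on the Hilbert space $W_{\gamma_0}$. Conjugating back by $I$ gives the statement for $(F_0^{\gamma_0})''(\psi_\lambda)$ on $E_{\gamma_0}$.

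We expect the main obstacle to be the invariance of $W_{\gamma_0}$ under $A^{-1}$ and the compatibility of $P$ with the chosen inner product. Both reduce to the spectral commutation in Lemma~\ref{lem:commute} and Corollary~\ref{cor:commute-borel}, which applies to unbounded functions via the bounded ones on each eigenspace. A secondary point is the smoothness of $\varphi_\lambda$ across the north pole. An alternative would be to avoid it by using only $|\varphi_\lambda|\in L^{2^*}$ together with the standard compactness of $h\mapsto|\varphi_\lambda|^{2^*-2}h$ from $H^{1/2}$ to $H^{-1/2}$. Both routes suffice.
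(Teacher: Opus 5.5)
Your proposal is correct and follows essentially the paper's proof: transfer to $W_{\gamma_0}$ via $I$, represent the form $\int(D_{g_{\mathbb S^n}}h,k)$ by $A=(|D_{g_{\mathbb S^n}}|+1)^{-1}D_{g_{\mathbb S^n}}$, make $A$ (together with $A^{-1}$) an isomorphism of $W_{\gamma_0}$ through the spectral commutation with $P$, and treat the nonlinear part as a compact perturbation. The differences are only minor: your $\langle\cdot,\cdot\rangle$ is exactly the paper's $H^{1/2}$ inner product (so no change-of-inner-product argument is needed), and you handle the nonlinear part by composing with the orthogonal projection $P$ and using the compact embedding $H^{1/2}\hookrightarrow L^2$ with a bounded weight, whereas the paper checks that the multiplication operators preserve $W_{\gamma_0}$ and uses $H^{3/2}\hookrightarrow H^{1/2}$.
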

\begin{proof}
By the definition of the restricted Hessian through the Riesz representation
theorem, \((F_0^{\gamma_0})''(\psi_\lambda)\) is a bounded self-adjoint
operator on \(E_{\gamma_0}\).
It remains to prove the Fredholm property.

Set $\widetilde\psi_\lambda:=I(\psi_\lambda)$.
Since $F_0=\mathcal L\circ I$ and
$I|_{E_{\gamma_0}}:E_{\gamma_0}\to W_{\gamma_0}$
is an isomorphism, it suffices to prove that the Hessian of
$\mathcal L|_{W_{\gamma_0}}$ at $\widetilde\psi_\lambda$
is Fredholm of index zero. For $\Phi,\Psi\in H^{1/2}(\mathbb S^n,\Sigma\mathbb S^n)$, we have
\begin{equation}\label{Hessian}
\begin{aligned}
\mathcal L''(\widetilde\psi_\lambda)[\Phi,\Psi]
={}&
\int_{\mathbb S^n}
(D_{g_{\mathbb S^n}}\Phi,\Psi)
\,d\mathrm{vol}_{g_{\mathbb S^n}}
-\frac n2
\int_{\mathbb S^n}
|\widetilde\psi_\lambda|^{2^*-2}(\Phi,\Psi)
\,d\mathrm{vol}_{g_{\mathbb S^n}}
\\
&-\frac{n}{n-1}
\int_{\mathbb S^n}
|\widetilde\psi_\lambda|^{2^*-4}
(\widetilde\psi_\lambda,\Phi)
(\widetilde\psi_\lambda,\Psi)
\,d\mathrm{vol}_{g_{\mathbb S^n}}.
\end{aligned}
\end{equation}Recall that the inner product on $H^{1/2}(\mathbb S^n,\Sigma\mathbb S^n)$ is
\[
(\Phi,\Psi)_{H^{1/2}}
=
\bigl(
|D_{g_{\mathbb S^n}}|^{1/2}\Phi,
|D_{g_{\mathbb S^n}}|^{1/2}\Psi
\bigr)_{L^2}
+
(\Phi,\Psi)_{L^2}.
\]Thus by equation~\eqref{Hessian}, we obtain
\[
\mathcal L''(\widetilde\psi_\lambda)\Phi
={}A\Phi
-\frac n2 (|D_{g_{\mathbb S^n}}|+1)^{-1}
\bigl(
|\widetilde\psi_\lambda|^{2^*-2}\Phi
\bigr)
-\frac{n}{n-1}(|D_{g_{\mathbb S^n}}|+1)^{-1}
\bigl(
|\widetilde\psi_\lambda|^{2^*-4}
(\widetilde\psi_\lambda,\Phi)\widetilde\psi_\lambda
\bigr),
\]
where $A=(|D_{g_{\mathbb S^n}}|+1)^{-1}D_{g_{\mathbb S^n}}$
was defined in Corollary~\ref{cor:commute-borel}.

We first verify that this operator preserves $W_{\gamma_0}$.
The explicit formula for $\psi_\lambda$ gives
\[
\widetilde\psi_\lambda
=
u_\lambda(\theta)\widetilde\gamma_0
+
v_\lambda(\theta)\partial_\theta\cdot\widetilde\gamma_0
\]
for some real-valued smooth functions $u_\lambda,v_\lambda$
on $(0,\pi)$.
For $\Phi\in\mathcal W_{\gamma_0}(\mathbb S^n)$, write
\[
\Phi
=
g_1(\theta)\widetilde\gamma_0
+
g_2(\theta)\partial_\theta\cdot\widetilde\gamma_0,
\qquad
g_1,g_2\in C_c^\infty((0,\pi),\mathbb R).
\]
The Clifford relations give
\[
|\widetilde\psi_\lambda|^2=u_\lambda^2+v_\lambda^2,
\qquad
(\widetilde\psi_\lambda,\Phi)
=u_\lambda g_1+v_\lambda g_2.
\]
Hence both
\(
|\widetilde\psi_\lambda|^{2^*-2}\Phi\)
 and \(
|\widetilde\psi_\lambda|^{2^*-4}
(\widetilde\psi_\lambda,\Phi)\widetilde\psi_\lambda
\)
belong to $\mathcal W_{\gamma_0}(\mathbb S^n)$.
Since $\widetilde\psi_\lambda$ is smooth and nowhere vanishing
on $\mathbb S^n$, these two multiplication maps are bounded
on $L^2$ and $H^1$, and therefore on $H^{1/2}$ by interpolation.
By density, they also preserve $W_{\gamma_0}$.
Together with Corollary~\ref{cor:commute-borel}, this gives
\[
\mathcal L''(\widetilde\psi_\lambda)(W_{\gamma_0})
\subset W_{\gamma_0}.
\]
Since $W_{\gamma_0}$ carries the induced $H^{1/2}$ inner product,
the Hessian of the restricted functional is therefore
\[
\bigl(\mathcal L|_{W_{\gamma_0}}\bigr)''
(\widetilde\psi_\lambda)
=
\mathcal L''(\widetilde\psi_\lambda)|_{W_{\gamma_0}}.
\]

We next show that $A|_{W_{\gamma_0}}$ is an isomorphism.
Since the eigenvalues of $D_{g_{\mathbb S^n}}$ satisfy
$|\lambda_k|\ge \frac{n}{2}$, the numbers
\(
\frac{1+|\lambda_k|}{\lambda_k}
\)
are uniformly bounded and define a bounded inverse of $A$
on $H^{1/2}$.
The function $t\mapsto\frac{1+|t|}{t}$ on the spectrum of
$D_{g_{\mathbb S^n}}$ can be extended to a bounded real-valued
Borel function on $\mathbb R$.
Corollary~\ref{cor:commute-borel} therefore gives
\[
PA^{-1}=A^{-1}P.
\]
Since $A^{-1}$ is bounded on $H^{1/2}$, Lemma~\ref{lem:projection-fixed-gamma} implies
\(
A^{-1}(W_{\gamma_0})\subset W_{\gamma_0}.
\)
Together with $A(W_{\gamma_0})\subset W_{\gamma_0}$,
this proves that $A|_{W_{\gamma_0}}$ is an isomorphism.

Finally, the last two terms in the formula for
$\mathcal L''(\widetilde\psi_\lambda)$ are compact on $H^{1/2}$.
Indeed, the multiplication maps appearing in these terms
are bounded on $H^{1/2}$, while
\[
(|D_{g_{\mathbb S^n}}|+1)^{-1}:
H^{1/2}(\mathbb S^n,\Sigma\mathbb S^n)
\longrightarrow
H^{3/2}(\mathbb S^n,\Sigma\mathbb S^n)
\]
is bounded and the embedding
$H^{3/2}(\mathbb S^n,\Sigma\mathbb S^n)
\hookrightarrow H^{1/2}(\mathbb S^n,\Sigma\mathbb S^n)$
is compact.
Their restrictions to $W_{\gamma_0}$ are therefore compact.

It follows that the restricted Hessian is the sum of an
isomorphism and a compact operator, and hence is Fredholm
of index zero.
Transferring back through $I|_{E_{\gamma_0}}$ proves the result.
\end{proof}
\begin{prop}
\label{prop:nondeg-fixed-gamma}$\mathcal M_{\gamma_0}$ is a non-degenerate critical manifold
of $F_0^{\gamma_0}$ in the sense that, for every
$\psi_\lambda\in\mathcal M_{\gamma_0}$, we have
\[
\ker\left((F_0^{\gamma_0})''(\psi_\lambda)\right)
=
T_{\psi_\lambda}\mathcal M_{\gamma_0}.
\]
\end{prop}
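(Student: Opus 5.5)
The plan is to transfer the problem to the sphere and use the known nondegeneracy of the full unperturbed functional there. The inclusion $T_{\psi_\lambda}\mathcal M_{\gamma_0}\subset\ker((F_0^{\gamma_0})''(\psi_\lambda))$ is standard. Indeed, $\lambda\mapsto\psi_\lambda$ is a $C^1$ curve in $E_{\gamma_0}$ of critical points of $F_0^{\gamma_0}$; membership in $E_{\gamma_0}$ follows by approximating $\psi_\lambda$ with cut-offs near $0$ and $\infty$, as in Lemma~\ref{lem:natural-constraint-fixed-gamma}, or by Lemma~\ref{lem:projection-fixed-gamma} after checking $P\widetilde\psi_\lambda=\widetilde\psi_\lambda$. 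Differentiating $(F_0^{\gamma_0})'(\psi_\lambda)=0$ in $\lambda$ then gives $(F_0^{\gamma_0})''(\psi_\lambda)\partial_\lambda\psi_\lambda=0$.

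For the reverse inclusion, take $\Phi\in W_{\gamma_0}$ with $(\mathcal L|_{W_{\gamma_0}})''(\widetilde\psi_\lambda)\Phi=0$. The proof of Lemma~\ref{lem:fredholm-property} shows that $\mathcal L''(\widetilde\psi_\lambda)$ maps $W_{\gamma_0}$ into itself and coincides there with the restricted Hessian. I would strengthen this to the statement that $\mathcal L''(\widetilde\psi_\lambda)$ commutes with $P$ on $H^{1/2}$. The operators $A$ and $(|D|+1)^{-1}$ commute with $P$ by Corollary~\ref{cor:commute-borel}. The two multiplication operators commute with $P$ because $\widetilde\psi_\lambda=u_\lambda E_1+v_\lambda E_2$ has $\theta$-dependent coefficients only; hence $a_{M\Phi},b_{M\Phi}$ are obtained from $a_\Phi,b_\Phi$ by the same pointwise $2\times2$ matrix in $(u_\lambda,v_\lambda)$. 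Consequently, for $\Phi\in W_{\gamma_0}$ and any $\Psi\in H^{1/2}$,
\[
\mathcal L''(\widetilde\psi_\lambda)[\Phi,\Psi]=\mathcal L''(\widetilde\psi_\lambda)[\Phi,P\Psi]=0 .
\]
The first equality uses $P\Phi=\Phi$ together with the $L^2$-symmetry of $P$ in each term of \eqref{Hessian}; the second uses $P\Psi\in W_{\gamma_0}$. Thus $\Phi$ lies in the kernel of the full Hessian of $\mathcal L$ on $H^{1/2}(\mathbb S^n,\Sigma\mathbb S^n)$, equivalently, after pulling back by $I$, of $F_0''(\psi_\lambda)$ on $\mathscr D^{1/2}$.

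Next I invoke the known nondegeneracy of the standard bubbles; this is the input I would cite rather than reprove. The kernel of $F_0''(\psi_{\lambda,0,\gamma_0})$ on $\mathscr D^{1/2}$ is spanned by the derivatives of $\psi_{\lambda,\xi,\gamma}$ in $\lambda$, in $\xi$, and in $\gamma$ along the real tangent space of the unit sphere of $\Sigma_n$. Equivalently, on $\mathbb S^n$ it is the tangent space to the manifold of $-\tfrac12$-Killing spinors transported by conformal maps; see \cite{BorrelliMalchiodiWu2021} and \cite{IsobeXu2023}. It remains to compute $P$ on each generator, since $\Phi=P\Phi$ must be a combination of their projections. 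The $\xi_i$-derivatives have angular dependence of first order in $\omega$ with the wrong parity, so both of their coefficients against $E_1,E_2$ integrate to zero over $\mathbb S^{n-1}$. For $\gamma$-derivatives with $\gamma\perp_{\mathbb R}\gamma_0$ and $|\gamma_0|=1$, the projection reduces to pairings $(\gamma,\gamma_0)$ and $(\omega\cdot\gamma,\omega\cdot\gamma_0)$; their spherical averages give $\operatorname{Re}\langle\gamma,\gamma_0\rangle=0$. There is also a cross term $(\omega\cdot\gamma,\gamma_0)$, whose spherical average is odd in $\omega$ and hence vanishes. These projections are therefore zero, leaving only $\partial_\lambda\psi_\lambda$, which is fixed by $P$.

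The main obstacle is this last bookkeeping. One must verify carefully that $P$ annihilates every non-dilation kernel element; the $\gamma$-directions are the delicate ones, especially the direction $i\gamma_0$, for which $(i\gamma_0,\gamma_0)=0$ only because we work with real parts. One must also check that the pulled-back kernel, described in the spherical picture, corresponds correctly under the identification $E_1,E_2$ of Lemma~\ref{lem:Wtheta-characterization}. If the cited nondegeneracy is phrased for $\mathcal L$ on $\mathbb S^n$, I would work directly there, using $\Phi=P\Phi$ and the $\theta$-only dependence of $E_1,E_2$ coefficients.
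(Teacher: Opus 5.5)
Your proposal is correct, but it takes a different route from the paper. Both arguments begin the same way: a kernel element of the restricted Hessian is shown to be a kernel element of the full Hessian. The paper gets this directly from the fact, shown in Lemma~\ref{lem:fredholm-property}, that $\mathcal L''(\widetilde\psi_\lambda)$ maps $W_{\gamma_0}$ into itself, so your stronger commutation with $P$ is more than is needed.

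After that step the paper never uses the full kernel. It reduces to $\lambda=1$ and writes $\widetilde\eta=a(\theta)\widetilde\psi_1+b(\theta)\partial_\theta\cdot\widetilde\psi_1$. It then computes $D_{g_{\mathbb S^n}}\widetilde\eta$ using the Killing equation $\nabla_X\widetilde\psi_1=-\frac12X\cdot\widetilde\psi_1$ and obtains $a'=(n-1)b$ and $-b'-(n-1)\cot\theta\,b=\frac{n}{n-1}a$. Hence $-\Delta_{g_{\mathbb S^n}}a=na$ for the smooth radial function $a=(\widetilde\psi_1,\widetilde\eta)$, so $a=c\cos\theta$ and the kernel is one-dimensional.

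You instead treat the restricted kernel as the $P$-fixed part of the full kernel, import the nondegeneracy of the whole bubble manifold $\{\psi_{\lambda,\xi,\gamma}\}$, and discard the extra generators. Your bookkeeping there is right. The pairings of $\partial_{\xi_i}\psi_{\lambda,\xi,\gamma_0}|_{\xi=0}$ with $\gamma_0$ and with $\omega\cdot\gamma_0$ are radial functions times $\omega_i$. For $\operatorname{Re}\langle\gamma,\gamma_0\rangle=0$, the only even term is $(\omega\cdot\gamma,\omega\cdot\gamma_0)=\operatorname{Re}\langle\gamma,\gamma_0\rangle=0$, and this also covers $\gamma=i\gamma_0$.

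The two routes buy different things. The paper's is self-contained and elementary: one ODE and the first eigenspace of the Laplacian. The full nondegeneracy theorem you rely on is substantially harder, since it must control all angular modes and all spinor directions. Granted that theorem, your route is shorter and avoids the regularity and frame bookkeeping. It also explains structurally why only the dilation survives: radial symmetry removes the translations, and the real fixed-spinor ansatz removes the phase $i\gamma_0$ and the other spinor directions. It would also carry over unchanged to other symmetry classes.

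Because everything in your argument rests on that citation, state the theorem precisely and cite its actual source. A natural one is the nondegeneracy of the spherical critical manifold in Isobe's perturbation paper \cite{I2013}, since the Ambrosetti--Malchiodi framework used there requires exactly that statement.
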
\begin{proof}
  Since every $\psi_\lambda\in\mathcal M_{\gamma_0}$ is a critical
point of $F_0^{\gamma_0}$, differentiating
$(F_0^{\gamma_0})'(\psi_\lambda)=0$ with respect to $\lambda$ gives
\(
T_{\psi_\lambda}\mathcal M_{\gamma_0}
\subset
\ker\left((F_0^{\gamma_0})''(\psi_\lambda)\right).
\) Thus to complete the proof of the above proposition, it
suffices to prove the opposite inclusion.
Recall that
\(
\psi_\lambda(x)
=
\lambda^{-\frac{n-1}{2}}\psi_1(\frac{x}{\lambda}).
\)
These dilations preserve $E_{\gamma_0}$ and leave $F_0$
invariant, and hence identify the Hessian kernels at
$\psi_1$ and $\psi_\lambda$. It therefore suffices to prove that
\begin{equation}\label{dimension}
\dim\ker\left((F_0^{\gamma_0})''(\psi_1)\right)\leq 1.
\end{equation}
Let
\[
\eta\in\ker (F_0^{\gamma_0})''(\psi_1),
\qquad
\widetilde\eta:=I(\eta)\in W_{\gamma_0},
\qquad
\widetilde\psi_1:=I(\psi_1).
\]
The explicit formula for $\psi_1$ and the conformal covariance
of the Dirac operator give
\[
|\widetilde\psi_1|=1,
\qquad
D_{g_{\mathbb S^n}}\widetilde\psi_1
=
\frac n2\widetilde\psi_1.
\]
In particular, $\widetilde\psi_1$ is smooth. The equality case in Friedrich's eigenvalue
estimate (see \cite{F2000,G2009}) gives
\begin{equation}\label{killing spinor}
\nabla_X\widetilde\psi_1
=
-\frac12 X\cdot\widetilde\psi_1,
\qquad X\in T\mathbb S^n. 
\end{equation}

As shown in the proof of Lemma~\ref{lem:fredholm-property},
$\mathcal L''(\widetilde\psi_1)$ preserves $W_{\gamma_0}$,
and its restriction to $W_{\gamma_0}$ is the Hessian of
$\mathcal L|_{W_{\gamma_0}}$. Hence
\(
\widetilde\eta
\in
\ker\bigl(\mathcal L''(\widetilde\psi_1)\bigr).
\) Using the formula for $\mathcal L''(\widetilde\psi_1)$
and $|\widetilde\psi_1|=1$, we obtain 
\begin{equation}\label{linearized}
D_{g_{\mathbb S^n}}\widetilde\eta
=
\frac n2\widetilde\eta
+
\frac n{n-1}
(\widetilde\psi_1,\widetilde\eta)\widetilde\psi_1.
 \end{equation}By the formula derived in the proof of Lemma~\ref{lem:Wtheta-characterization} and the
explicit expression for $\psi_1$, we obtain, for $0<\theta<\pi$,
\[
\widetilde\psi_1
=
\sin\frac{\theta}{2}\,\widetilde\gamma_0
+
\cos\frac{\theta}{2}\,
\partial_\theta\cdot\widetilde\gamma_0.
\]
and therefore
\[
\partial_\theta\cdot\widetilde\psi_1
=
-\cos\frac\theta2\,\widetilde\gamma_0
+
\sin\frac\theta2\,\partial_\theta\cdot\widetilde\gamma_0.
\]
Thus, by Lemmas~\ref{lem:Wtheta-characterization} and~\ref{lem:projection-fixed-gamma}, we may write
\[
\widetilde\eta
=
a(\theta)\widetilde\psi_1
+
b(\theta)\partial_\theta\cdot\widetilde\psi_1
\]
for real-valued functions $a,b\in C^\infty((0,\pi))$. Since $\widetilde\psi_1$ and
$\partial_\theta\cdot\widetilde\psi_1$ are pointwise
orthonormal with respect to the real inner product, we have
\[
a(\theta)=(\widetilde\eta,\widetilde\psi_1).
\]
Thus $a$ extends to a smooth real-valued function on
$\mathbb S^n$.
Choose a local orthonormal frame $\{e_i\}_{i=1}^n$ with
$e_1=\partial_\theta$. On $0<\theta<\pi$, we have
\[
\nabla_{\partial_\theta}\partial_\theta=0,
\qquad
\nabla_{e_j}\partial_\theta=\cot\theta\,e_j,
\quad j=2,\ldots,n.
\]
Using \eqref{killing spinor}, we compute
\[
\begin{aligned}
D_{g_{\mathbb S^n}}\widetilde\eta
&=
\sum_{i=1}^n e_i\cdot\nabla_{e_i}
\bigl(
a\widetilde\psi_1
+b\partial_\theta\cdot\widetilde\psi_1
\bigr)\\
&=
a'\partial_\theta\cdot\widetilde\psi_1
+\frac n2 a\widetilde\psi_1
-b'\widetilde\psi_1
+b\cot\theta\sum_{j=2}^n
e_j\cdot e_j\cdot\widetilde\psi_1
-\frac b2\sum_{i=1}^n
e_i\cdot\partial_\theta\cdot e_i
\cdot\widetilde\psi_1\\
&=
\left(
\frac n2 a-b'-(n-1)\cot\theta\,b
\right)\widetilde\psi_1
+
\left(
a'-\frac{n-2}{2}b
\right)\partial_\theta\cdot\widetilde\psi_1.
\end{aligned}
\]
Substituting this expression and
$(\widetilde\psi_1,\widetilde\eta)=a$
into equation~\eqref{linearized} and comparing the two
components, we obtain
\[
\begin{cases}
-b'-(n-1)\cot\theta\,b=\dfrac{n}{n-1}a,\\[4pt]
a'=(n-1)b.
\end{cases}
\]
Eliminating $b$, we obtain
\[
a''+(n-1)\cot\theta\,a'+na=0.
\]
Since $a$ is smooth on $\mathbb S^n$, this means
\[
-\Delta_{g_{\mathbb S^n}}a=na.
\]
The eigenspace of $-\Delta_{g_{\mathbb S^n}}$ corresponding
to the eigenvalue $n$ is spanned by the coordinate functions
$y^1,\ldots,y^{n+1}$ restricted to $\mathbb S^n$.
Since
\(
y=(\sin\theta\,\omega,\cos\theta)
\)
and $a$ depends only on $\theta$, it follows that
\[
a(\theta)=c\cos\theta
\]
for some $c\in\mathbb R$. Thus
\[
b(\theta)=-\frac c{n-1}\sin\theta.
\]
Consequently, every kernel element satisfies
\[
\widetilde\eta
=
c\left(
\cos\theta\,\widetilde\psi_1
-
\frac{\sin\theta}{n-1}
\partial_\theta\cdot\widetilde\psi_1
\right).
\]By the injectivity of $I$, \eqref{dimension} follows.
Since $T_{\psi_1}\mathcal M_{\gamma_0}$ is one-dimensional,
the inclusion proved at the beginning yields
\[
\ker\left((F_0^{\gamma_0})''(\psi_1)\right)
=
T_{\psi_1}\mathcal M_{\gamma_0}.
\]
The conclusion for every $\lambda>0$ follows by dilation.
\end{proof}
\section{Proofs of the main theorems}\label{sec:The reduced problem and existence results}
As shown in Section~\ref{sec:The fixed-spinor radial setting}, $F_0^{\gamma_0}$ and
$\mathcal M_{\gamma_0}$ satisfy assumptions (A1)--(A3)
of Proposition~\ref{reduction}. Thus, by Proposition~\ref{reduction} and Lemma~\ref{lem:natural-constraint-fixed-gamma},
to obtain solutions of equation~\eqref{(1.1)} for sufficiently small
$\varepsilon>0$, it suffices to find a stable critical
point in $(0,\infty)$ of
\[
\begin{aligned}
\Gamma(\lambda)
&=-\frac{4}{n-1}G(\psi_\lambda)
=\int_{\mathbb R^n}
K(|x|)|\psi_\lambda(x)|^{2^*}\,dx\\
&=2^n\int_{\mathbb R^n}
K(|x|)\frac{\lambda^n}{(\lambda^2+|x|^2)^n}\,dx.
\end{aligned}
\]
Set
\[
z_0(r):=\frac{2^{\frac{n-1}{2}}}
{(1+r^2)^{\frac{n-1}{2}}},
\qquad r\geq0.
\]
Using polar coordinates and then a change of variables,
we obtain
\begin{equation}\label{rf}
\Gamma(\lambda)
=\omega_n\lambda^{-n}
\int_0^\infty K(r)
z_0\left(\frac r\lambda\right)^{2^*}r^{n-1}\,dr=\omega_n
\int_0^\infty K(\lambda r)z_0(r)^{2^*}r^{n-1}\,dr,
\end{equation}where $\omega_n=|\mathbb S^{n-1}|$. By \eqref{ass:K1}, we have:
\begin{enumerate}
\item[(a)]
$\Gamma$ extends continuously to $\lambda=0$ by setting
\(
\Gamma(0)
=
\omega_n K(0)\int_0^\infty z_0(r)^{2^*}r^{n-1}\,dr.
\)

\item[(b)]
$\Gamma'(0)=0$, and $\Gamma$ admits a $C^1$ extension
to $\mathbb R$ by symmetry.
\end{enumerate}
The following lemmas establish further properties of $\Gamma$.
\begin{lem}\label{lem 4.1}
Suppose that $K$ satisfies \eqref{ass:K1}. Then $\Gamma(\lambda)\rightarrow 0$ as $\lambda\rightarrow +\infty$.
\end{lem}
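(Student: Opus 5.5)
We work with the first representation in \eqref{rf}, i.e.
\[
\Gamma(\lambda)=\omega_n\lambda^{-n}\int_0^\infty K(r)\,z_0\!\left(\tfrac r\lambda\right)^{2^*}r^{n-1}\,dr
=2^n\omega_n\int_0^\infty K(r)\frac{\lambda^n}{(\lambda^2+r^2)^n}\,r^{n-1}\,dr,
\]
and let $\lambda\to+\infty$ directly. Note that $z_0(s)^{2^*}=2^n(1+s^2)^{-n}$. The plan is to split the $r$-integral into $(0,1)$ and $(1,\infty)$. The first piece uses only $K\in L^\infty$. The second piece uses the weighted integrability in \eqref{ass:K1} together with a bound on the kernel that decays in $\lambda$.

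\emph{Near part.} Since $\lambda^n/(\lambda^2+r^2)^n\le\lambda^{-n}$, we get
\[
\left|\int_0^1 K(r)\frac{\lambda^n}{(\lambda^2+r^2)^n}r^{n-1}\,dr\right|\le \|K\|_\infty\,\lambda^{-n}\int_0^1 r^{n-1}\,dr\longrightarrow0.
\]

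\emph{Far part.} We need a bound on $\lambda^n(\lambda^2+r^2)^{-n}$ of the form $C(\lambda)\,r^{-\alpha}$ with $C(\lambda)\to0$, so that it can be paired with $|K(r)|r^{n-1-\alpha}\in L^1(1,\infty)$. Write $\lambda^2+r^2\ge\lambda^{2\beta}r^{2(1-\beta)}$ (weighted AM--GM, up to a constant) for $\beta\in[0,1]$. Choosing $2n(1-\beta)=\alpha$, that is $\beta=1-\frac{\alpha}{2n}\in(\tfrac12,1)$ because $0<\alpha<n$, gives
\[
\frac{\lambda^n}{(\lambda^2+r^2)^n}\le C\,\lambda^{n-2n\beta}\,r^{-\alpha}=C\,\lambda^{\alpha-n}\,r^{-\alpha}.
\]
Since $\alpha<n$, the factor $\lambda^{\alpha-n}$ tends to $0$. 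Hence the far part is bounded by
\[
C\lambda^{\alpha-n}\int_1^\infty|K(r)|r^{n-1-\alpha}\,dr\to0.
\]
Alternatively, one can argue by dominated convergence: the integrand tends to $0$ pointwise and is dominated by $|K|r^{n-1-\alpha}$ for $\lambda\ge1$, because $\lambda^n r^\alpha\le(\lambda^2+r^2)^{n}$ there. The explicit rate is cleaner, so we use it.

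The only real step is choosing the interpolation exponent in the far region so that the bound is compatible with $r^{-\alpha}$ and still carries a decaying power of $\lambda$. The condition $\alpha<n$ in \eqref{ass:K1} is exactly what makes the exponent $\alpha-n$ negative. Adding the near and far parts gives $\Gamma(\lambda)\to0$ as $\lambda\to+\infty$.
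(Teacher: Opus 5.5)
Your proof is correct and is essentially the paper's argument: split the integral at $r=1$, bound the kernel $\lambda^n(\lambda^2+r^2)^{-n}$ by $\lambda^{-n}$ on $(0,1)$ and by $\lambda^{\alpha-n}r^{-\alpha}$ on $(1,\infty)$, and conclude from $\alpha<n$. Your weighted AM--GM step in fact gives that far-field bound with constant $1$, exactly as in the paper's displayed estimate.
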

\begin{proof}
From \eqref{rf}, we have
$$\begin{aligned}
	|\Gamma({\lambda})|&=\omega_n2^n\left|\int_{0}^{\infty}K(r)\frac{\lambda^nr^{n-1}}{(\lambda^2+r^2)^n}d r\right|\\
	&\leq \omega_n2^n\lambda^{-n}\int_{0}^{1}|K(r)|r^{n-1}d r +\omega_n2^n\lambda^{\alpha-n}\int_{1}^{\infty}\frac{|K(r)|}{r^\alpha}r^{n-1}d r.
\end{aligned}$$	
Since $ r^{-\alpha} K(r) \in L^1\left([1, \infty), r^{n-1} d r\right)$ and $\alpha<n$, the result follows.
\end{proof}
\begin{lem}\label{neq}
Under the preceding assumptions, if \(K\not\equiv0\), then
\(
\Gamma\not\equiv0.
\)
\end{lem}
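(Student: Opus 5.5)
We prove that if $\Gamma\equiv0$ on $(0,\infty)$ then $K\equiv0$. Since $\Gamma$ is a Poisson-type integral of $K$, it is natural to turn it into a Mellin convolution and invert it. In \eqref{rf}, the substitution $r=e^{t}$, $\lambda=e^{s}$ gives
\[
\Gamma(e^{s})=2^n\omega_n\int_{-\infty}^{\infty}K(e^{t})\,\frac{e^{ns}e^{nt}}{(e^{2s}+e^{2t})^{n}}\,dt
=2^n\omega_n\int_{\mathbb R}K(e^{t})\,h(t-s)\,dt,
\qquad h(u):=\frac{1}{(2\cosh u)^{n}}.
\]
So $\Gamma(e^{s})=2^n\omega_n\,(k\ast \check h)(s)$ with $k(t):=K(e^t)$. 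The kernel $h$ is even, positive, and lies in $L^1(\mathbb R)$.

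\textbf{Step 1: a weighted function in $L^1$.} The function $k$ is bounded but need not be integrable, so we first move to a weighted version. For a real $\beta$, write
\[
e^{\beta s}\Gamma(e^s)=2^n\omega_n\int k(t)e^{\beta t}\,h(t-s)e^{-\beta(t-s)}\,dt,
\]
and set $k_\beta(t):=k(t)e^{\beta t}$ and $h_\beta(u):=h(u)e^{-\beta u}$. We choose $\beta\in(0,\min(n,\alpha))$. Then $h_\beta\in L^1$, because $|\beta|<n$. For $t\le0$ we have $|k_\beta|\le\|K\|_\infty e^{\beta t}$, which is integrable. For $t\ge0$,
\[
\int_0^\infty|K(e^t)|e^{\beta t}\,dt=\int_1^\infty|K(r)|\,r^{\beta-1}\,dr .
\]
This is finite by \eqref{ass:K1} only if $\beta-1\le n-1-\alpha$, i.e.\ $\beta\le n-\alpha$. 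So we should instead take $\beta\in(0,\min(n-\alpha,\,n))$, which is possible because $\alpha<n$. With this choice $k_\beta\in L^1(\mathbb R)$ and $h_\beta\in L^1(\mathbb R)$.

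\textbf{Step 2: Fourier transform.} If $\Gamma\equiv0$, then $k_\beta\ast\check h_\beta\equiv0$. Taking Fourier transforms gives $\widehat{k_\beta}(\xi)\,\widehat{\check h_\beta}(\xi)=0$ for all $\xi\in\mathbb R$. Now
\[
\widehat{h_\beta}(\xi)=\int_{\mathbb R}\frac{e^{-(\beta+i\xi)u}}{(2\cosh u)^n}\,du .
\]
This is the Fourier transform of $(\cosh u)^{-n}$ evaluated at the complex point $\xi-i\beta$, and it is a Beta function:
\[
\int_{\mathbb R}\frac{e^{zu}}{(2\cosh u)^n}\,du=\tfrac12\,B\!\left(\tfrac{n+z}{2},\tfrac{n-z}{2}\right)=\frac{\Gamma_E\!\left(\tfrac{n+z}{2}\right)\Gamma_E\!\left(\tfrac{n-z}{2}\right)}{2\,\Gamma_E(n)},
\qquad |\operatorname{Re}z|<n,
\]
where $\Gamma_E$ denotes Euler's Gamma function. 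Euler's Gamma function has no zeros, so $\widehat{h_\beta}$ never vanishes. Hence $\widehat{k_\beta}\equiv0$, and by Fourier uniqueness in $L^1$, $k_\beta=0$ a.e. Therefore $K(r)=0$ for a.e.\ $r>0$, and since $K$ is continuous, $K\equiv0$. This contradicts the hypothesis, so $\Gamma\not\equiv0$.

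\textbf{Main obstacle.} The delicate point is choosing the weight $\beta$ so that both the weighted kernel $h_\beta$ and the weighted function $k_\beta$ are integrable simultaneously; the decay condition in \eqref{ass:K1} is exactly what makes such a $\beta>0$ exist. The second point is verifying the Beta-function formula for the transform. If one prefers to avoid it, an alternative is to note that $\widehat h(\zeta)$ is analytic in the strip $|\operatorname{Im}\zeta|<n$, so its zero set on the line $\operatorname{Im}\zeta=-\beta$ is discrete. Then $\widehat{k_\beta}$, which is continuous, vanishes off a discrete set and hence everywhere. This analyticity argument alone already suffices to conclude.
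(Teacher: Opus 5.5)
Your proof is correct and is essentially the paper's argument: the paper writes $\Gamma(\lambda)=\omega_n\lambda^{-m}(g\times h)(\lambda)$ as a Mellin convolution with $g(r)=K(r)r^{m}$, $m=n-\alpha$, and an explicit kernel $h$ whose Mellin transform is a nonvanishing Beta function. This is exactly your Fourier argument in the logarithmic variables $r=e^t$, with weight $\beta=n-\alpha$, the endpoint of your admissible range. You should delete the initial choice $\beta\in(0,\min(n,\alpha))$, which is wrong and which you then correct yourself; the condition actually needed is $0<\beta\le n-\alpha$.
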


\begin{proof}For $g \in L^1([0, \infty), d r / r)$, we define the Mellin transform by
$$
\mathscr{M}[g](s)=\int_0^{\infty} r^{-i s} g(r) \frac{d r}{r},
$$
see \cite{F1992}. The associated convolution is defined by
$$
(g \times h)(t)=\int_0^{\infty} g(r) h\left(\frac{t}{r}\right) \frac{d r}{r}.
$$
Then $\mathscr{M}[g \times h]=\mathscr{M}[g] \cdot \mathscr{M}[h]$. With this notation, $\Gamma$ can be written as
$$
\Gamma(\lambda)=\omega_n\int_0^{\infty} K(r) z_0^{2^*}\left(\frac{r}{\lambda}\right)\left(\frac{r}{\lambda}\right)^n \frac{d r}{r} .
$$Next, set $m=n-\alpha$ and$$
g(r)=K(r) r^m, \quad h(r)=z_0^{2^*}\left(\frac{1}{r}\right)\left(\frac{1}{r}\right)^{n-m} .
$$
Observe that $g, h \in L^1([0, \infty), d r / r)$. It follows that $\Gamma(\lambda)=\omega_n\lambda^{-m}(g \times h)(\lambda)$. Suppose, by contradiction, that \(\Gamma\equiv0\). Then $g \times h \equiv 0$ and hence
$$
\mathscr{M}[g] \cdot \mathscr{M}[h]=\mathscr{M}[g \times h] \equiv 0 .
$$
On the other hand, 
\[
\mathscr{M} [h](s)
=
2^{n-1}
B\left(
\frac{n+m-is}{2},
\frac{n-m+is}{2}
\right),
\]
which is nonzero for every \(s\in\mathbb R\). Therefore, $\mathscr{M}[g] \equiv 0$. It follows that \(g\equiv0\), and hence \(K\equiv0\), a contradiction.
\end{proof}
We next establish an asymptotic formula for $\Gamma'$,
which will be used to prove Theorem~\ref{thm:intro-existence} in case (iii).

\begin{lem}\label{lem 4.3}
Suppose that
\(
K'(r)r\in L^1((0,\infty),r^{n-1}dr),
\)
and set
\(
\kappa:=\int_0^\infty K'(r)r^n\,dr.
\)
Then, if \(\kappa<0\), respectively \(\kappa>0\), one has
\[
\Gamma'(\lambda)\to0^-,
\qquad\text{respectively }0^+,
\quad\text{as }\lambda\to+\infty.
\]
\end{lem}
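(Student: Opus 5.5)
The plan is to differentiate the representation \eqref{rf} under the integral sign, rescale so that the weight $K'(s)s^n$ appears explicitly, and then pass to the limit $\lambda\to+\infty$ by dominated convergence. The expected outcome is
\[
\lim_{\lambda\to+\infty}\lambda^{n+1}\Gamma'(\lambda)=2^n\omega_n\,\kappa ,
\]
which gives both the convergence $\Gamma'(\lambda)\to0$ and its sign.

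\textbf{Step 1 (formula for $\Gamma'$).} I start from the second form in \eqref{rf}, $\Gamma(\lambda)=\omega_n\int_0^\infty K(\lambda r)z_0(r)^{2^*}r^{n-1}\,dr$. Fix $0<\lambda_1<\lambda_2$. Since $K\in C^1$, for each $r$ we have
\[
K(\lambda_2 r)-K(\lambda_1 r)=\int_{\lambda_1}^{\lambda_2}K'(tr)\,r\,dt .
\]
To use Fubini I need
\[
\int_{\lambda_1}^{\lambda_2}\int_0^\infty |K'(tr)|\,r^n z_0(r)^{2^*}\,dr\,dt<\infty .
\]
Substituting $s=tr$ turns the inner integral into $t^{-n-1}\int_0^\infty|K'(s)|s^n z_0(s/t)^{2^*}\,ds$. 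Because $z_0\le 2^{\frac{n-1}{2}}$, this is bounded by $2^n t^{-n-1}\int_0^\infty|K'(s)|s^n\,ds$. That integral is finite precisely by the hypothesis $K'(r)r\in L^1(r^{n-1}dr)$, and the factor $t^{-n-1}$ is bounded on $[\lambda_1,\lambda_2]$. Hence $\Gamma(\lambda_2)-\Gamma(\lambda_1)=\int_{\lambda_1}^{\lambda_2}\Phi(t)\,dt$, where
\[
\Phi(t):=\omega_n\,t^{-n-1}\int_0^\infty K'(s)\,s^n\,z_0(s/t)^{2^*}\,ds .
\]

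\textbf{Step 2 (continuity of $\Phi$, hence $\Gamma'=\Phi$).} The integrand of $\Phi$ is continuous in $t$ and dominated by $2^n|K'(s)|s^n$, which is integrable. Dominated convergence therefore makes $\Phi$ continuous on $(0,\infty)$. By the fundamental theorem of calculus, $\Gamma\in C^1(0,\infty)$ with $\Gamma'=\Phi$.

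\textbf{Step 3 (limit).} As $\lambda\to\infty$ we have $z_0(s/\lambda)^{2^*}\to z_0(0)^{2^*}=2^n$ pointwise, with the same integrable dominator $2^n|K'(s)|s^n$. Thus
\[
\lambda^{n+1}\Gamma'(\lambda)\longrightarrow 2^n\omega_n\kappa .
\]
If $\kappa\neq0$, then $\Gamma'(\lambda)$ has the sign of $\kappa$ for all large $\lambda$, while $|\Gamma'(\lambda)|\le C\lambda^{-n-1}\to0$. This is exactly $\Gamma'(\lambda)\to0^-$ when $\kappa<0$ and $\Gamma'(\lambda)\to0^+$ when $\kappa>0$.

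The only delicate point is justifying the differentiation under the integral, since $K'$ is not assumed bounded. The integrated (Fubini) formulation in Step 1, combined with the uniform bound on $z_0$, sidesteps this.
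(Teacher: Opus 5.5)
Your proof is correct and follows the paper's argument. Both use the rescaled formula $\Gamma'(\lambda)=\omega_n\lambda^{-n-1}\int_0^\infty K'(s)\,z_0^{2^*}(s/\lambda)\,s^n\,ds$ followed by dominated convergence with dominator $2^n|K'(s)|s^n$; your only addition is the Fubini and fundamental-theorem-of-calculus justification of differentiating under the integral sign, which the paper uses without comment.
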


\begin{proof}
Applying the change of variables \(s=\lambda r\) to \eqref{rf}, we obtain
\[
\Gamma'(\lambda)
=
\omega_n\lambda^{-n-1}
\int_0^\infty
K'(s)
z_0^{2^*}\left(\frac{s}{\lambda}\right)
s^n\,ds.
\]
Since
\[
z_0^{2^*}\left(\frac{s}{\lambda}\right)\to z_0^{2^*}(0)=2^n
\]
and
\[
\left|
K'(s)
z_0^{2^*}\left(\frac{s}{\lambda}\right)
s^n
\right|
\le
C|K'(s)|s^n,
\]
with
$
|K'(s)|s^n\in L^1(0,\infty),
$
the dominated convergence theorem yields
\[
\lambda^{n+1}\Gamma'(\lambda)
\to
\omega_n2^n
\int_0^\infty K'(s)s^n\,ds
=
\omega_n2^n\kappa.
\]
Therefore
\[
\Gamma'(\lambda)
=
\omega_n2^n\kappa\,\lambda^{-n-1}
+
o(\lambda^{-n-1}).
\]
The conclusion follows immediately.
\end{proof}
\begin{proof}[\textbf{Proof of Theorem~\ref{thm:intro-existence}}]
We consider conditions (i)--(iii) separately.

\smallskip
\noindent\textit{Case (i): $K(0)=0$ and $K\not\equiv0$.} By Lemmas~\ref{lem 4.1} and~\ref{neq},
\[
\Gamma(0)=0,
\qquad
\lim_{\lambda\to+\infty}\Gamma(\lambda)=0,
\qquad
\Gamma\not\equiv0.
\]
Hence $\Gamma$ attains a positive maximum or a negative
minimum at some $\bar\lambda>0$.

\smallskip
\noindent\textit{Case (ii): $K\in C^2(\mathbb R^n)$ and
$K(0)K''(0)>0$.}
We distinguish between $n\ge3$ and $n=2$.

Suppose first that $n\ge3$.
Since $K$ is radial and of class $C^2$, we have $K'(0)=0$.
Together with the boundedness of $K$, this gives
\[
|K(r)-K(0)|\le Cr^2,
\qquad r\ge0.
\]
Moreover,
\[
\int_0^\infty z_0(r)^{2^*}r^{n+1}\,dr<\infty.
\]
Taylor's formula and dominated convergence applied to
\eqref{rf} therefore yield
\[
\Gamma(\lambda)
=
\Gamma(0)
+
\frac{\omega_n}{2}K''(0)\lambda^2
\int_0^\infty z_0(r)^{2^*}r^{n+1}\,dr
+
o(\lambda^2)
\qquad\text{as }\lambda\to0^+.
\]

Now suppose that $n=2$. In this case,
\[
\Gamma(\lambda)
=
4\omega_2\lambda^2
\int_0^\infty K(r)\frac{r}{(\lambda^2+r^2)^2}\,dr,
\qquad
\Gamma(0)=2\omega_2K(0).
\]
Consequently,
\[
\Gamma(\lambda)-\Gamma(0)
=
4\omega_2\lambda^2
\int_0^\infty
(K(r)-K(0))\frac{r}{(\lambda^2+r^2)^2}\,dr.
\]
Write
\[
K(r)-K(0)
=
\frac12K''(0)r^2+r^2\rho(r),
\qquad
\rho(r)\to0
\quad\text{as }r\to0^+.
\]
Given $\eta>0$, choose $\delta>0$ such that
$|\rho(r)|\le\eta$ for $0<r\le\delta$.
A direct computation gives
\[
\begin{aligned}
\int_0^\delta\frac{r^3}{(\lambda^2+r^2)^2}\,dr
=
\frac12\left[
\log\frac{\lambda^2+\delta^2}{\lambda^2}
+
\frac{\lambda^2}{\lambda^2+\delta^2}-1
\right]=
\log\frac1\lambda+O(1)
\qquad\text{as }\lambda\to0^+.
\end{aligned}
\]
Thus,
\[
\left|
\lambda^2\int_0^\delta
\rho(r)\frac{r^3}{(\lambda^2+r^2)^2}\,dr
\right|
\le
\eta\lambda^2
\left(\log\frac1\lambda+O(1)\right).
\]
On the other hand, the boundedness of $K$ implies
\[
\left|
\lambda^2\int_\delta^\infty
(K(r)-K(0))\frac{r}{(\lambda^2+r^2)^2}\,dr
\right|
\le
C\lambda^2\int_\delta^\infty r^{-3}\,dr
=
O(\lambda^2).
\]
Since $\eta>0$ is arbitrary, these estimates yield
\[
\Gamma(\lambda)
=
\Gamma(0)
+
2\omega_2K''(0)\lambda^2\log\frac1\lambda
+
o\left(\lambda^2\log\frac1\lambda\right)
\qquad\text{as }\lambda\to0^+.
\]

In both cases, if $K(0)>0$, then $K''(0)>0$ and
\(
\Gamma(\lambda)>\Gamma(0)>0
\)
for all sufficiently small $\lambda>0$.
Together with Lemma~\ref{lem 4.1}, this shows that $\Gamma$ attains
a positive global maximum in $(0,\infty)$.
If $K(0)<0$, the same argument shows that $\Gamma$
attains a negative global minimum in $(0,\infty)$.

\smallskip
\noindent\textit{Case (iii):}
Assume that
\(
\int_0^\infty |K'(r)|r^n\,dr<\infty\),
\(
K(0)\kappa>0,
\)
where $\kappa$ is defined in Lemma~\ref{lem 4.3}.

If $K(0)>0$, then $\kappa>0$, and Lemma~\ref{lem 4.3} gives
$\Gamma'(\lambda)>0$ for all sufficiently large $\lambda$.
Since $\Gamma(\lambda)\to0$ as $\lambda\to+\infty$,
we have $\Gamma(\lambda)<0$ for all sufficiently large
$\lambda$.
As $\Gamma(0)>0$, it follows that $\Gamma$ attains
a negative global minimum in $(0,\infty)$.
If $K(0)<0$, then $\kappa<0$, and the same argument
gives a positive global maximum in $(0,\infty)$.

Since $\Gamma$ is nonconstant and real analytic on
$(0,\infty)$, the extremum obtained in each case is
a stable critical point.
The conclusion follows from Proposition~\ref{reduction} and Lemma~\ref{lem:natural-constraint-fixed-gamma}.
\end{proof}
\begin{proof}[\textbf{Proof of Theorem~\ref{thm:intro-multiplicity}}]
Suppose first that \emph{(i)} holds. If
\(\Gamma(1)>0\), then \(K(0)\leq0\), and hence
\(
\Gamma(0)\leq0.
\)
Also, by Lemma~\ref{lem 4.1},
$
\lim_{\lambda\to\infty}\Gamma(\lambda)=0.
$
Since \(\Gamma(1)>0\), the function \(\Gamma\) attains a positive
maximum at some point in \((0,\infty)\). If
\(\Gamma(1)<0\), the same argument, with maxima replaced by minima,
shows that \(\Gamma\) attains a negative minimum in
\((0,\infty)\).

Suppose next that \emph{(ii)} holds. If
\(\Gamma'(1)>0\), then
\(
\Gamma(\lambda)>0
\)
for some \(\lambda>1\) sufficiently close to \(1\). Since
\(\Gamma(\lambda)\to0\) as \(\lambda\to\infty\), the function
\(\Gamma\) attains a positive maximum in \((1,\infty)\). The case
\(\Gamma'(1)<0\) is analogous and yields a negative minimum in
\((1,\infty)\).
Moreover, assume that
\(\Gamma'(1)K(0)\geq0\). If \(\Gamma'(1)>0\), then
\(K(0)\geq0\), while
\(
\Gamma(\lambda)<0
\)
for some \(\lambda<1\) sufficiently close to \(1\). Hence
\(\Gamma\) attains a negative minimum in \((0,1)\). The case
\(\Gamma'(1)<0\) similarly yields a positive maximum in
\((0,1)\). As in the proof of Theorem~1\ref{thm:intro-existence}, the extrema obtained above
are stable critical points.
The existence conclusion follows from Proposition~\ref{reduction}
and Lemma~\ref{lem:natural-constraint-fixed-gamma}.
In the multiplicity case, the two critical points lie
in the disjoint intervals $(0,1)$ and $(1,\infty)$.
Applying the reduction in disjoint neighborhoods of
the corresponding bubbles yields two distinct solutions.
\end{proof}
\begin{rem}
Our results hold for every choice of the unit spinor
$\gamma_0$. In particular, the two solutions in
Theorem~\ref{thm:intro-multiplicity} are obtained for the same fixed $\gamma_0$.
Radial symmetry allows us to reduce the existence problem
to finding critical points of the function $\Gamma(\lambda)$.
A natural further question is whether the solutions
constructed here persist under small nonradial
perturbations of $K$.
\end{rem}

\end{document}